\newtheorem{proposition}{Proposition}[section]
\newtheorem{theorem}[proposition]{Theorem}
\newtheorem{corollary}[proposition]{Corollary}
\newtheorem{lemma}[proposition]{Lemma}
\newtheorem*{bootstrap*}{Bootstrap Step}
\theoremstyle{definition}
\newtheorem{definition}[proposition]{Definition}
\newtheorem{remark}[proposition]{Remark}
\numberwithin{equation}{section}
\newcommand{\red}[1]{\textcolor{red}{#1}}
\newcommand\eps{\varepsilon}
\newcommand\de{{\partial}}
\newcommand{\hT}{\mathcal{T}}
\newcommand{\hP}{\mathcal{P}}
\newcommand{\hS}{{\mathcal S}}
\newcommand{\hR}{{\mathcal R}}
\newcommand{\hU}{{\mathcal U}}
\def\R{\mathbb{R}}
\newcommand{\pfp}{\mathfrak{p}^+_f}
\newcommand{\pfm}{\mathfrak{p}^-_f}
\newcommand{\pcp}{\mathfrak{p}^+_c}
\newcommand{\pcm}{\mathfrak{p}^-_c}
\newcommand{\pref}{p^{\text{ref}}}
\newcommand{\uref}{u^{\text{ref}}}
\title{Hard-congestion limit of the p-system in the BV setting}
\author{
Fabio Ancona\footnote{Dipartimento di Matematica "Tullio Levi-Civita", Universit\`a di Padova, Italy, ancona@math.unipd.it}, 
Roberta Bianchini\footnote{Consiglio Nazionale delle Ricerche, Istituto per le Applicazioni del Calcolo, 00185 Rome, Italy, roberta.bianchini@cnr.it},
Charlotte Perrin\footnote{CNRS, Aix Marseille Univ., I2M, Marseille, France,charlotte.perrin@cnrs.fr}
}
\begin{document}

\maketitle
\begin{small}
\begin{abstract}
This note is concerned with the rigorous justification of the so-called \textit{hard congestion limit} from a compressible system with singular pressure towards a mixed compressible-incompressible system modeling partially congested dynamics, for small data in the framework of BV solutions.\\
We present a first convergence result for perturbations of a reference state represented by a single propagating large interface front, while the study of a more general framework where the reference state is constituted by multiple interface fronts is announced in the conclusion and will be the subject of a forthcoming paper.\\
A key element of the proof is the use of a suitably weighted Glimm functional that allows to obtain precise estimates on the BV norm of the front-tracking approximation.

\end{abstract}
\end{small}

\section{Introduction}

In this note, we analyze the following $p-$system expressed in terms of pressure, $p$, and velocity, $u$:
\begin{subnumcases}{\label{eq:psystem-ep}}
\partial_t \hT_\eps (p) - \partial_x u = 0, \\
\partial_t u + \partial_x p = 0,
\end{subnumcases}
where $\tau = \hT_\eps(p)$ is the specific volume, i.e. is the inverse of the density of the fluid, $\varrho = \tau^{-1}$. 
The law $\hT_\eps$ is then defined as the inverse of the pressure law $\hP_\eps=\hP_\eps (\tau)$ that we assume to be a singular function with a vertical asymptot in $\tau=\tau^\star = 1$:
\begin{equation}\label{eq:T-ep}
  \hT_\eps = \hP_\eps^{-1} \quad \text{with, for} ~ \tau > 1, \quad  
  \hP_\eps(\tau) \doteq  \dfrac{\kappa}{\tau^{\gamma_i}} + \dfrac{\eps}{(\tau-1)^{\gamma_c}}, \quad 
  \kappa, \eps > 0,~ \gamma_{i,c} > 1. 
\end{equation}
The functions $\hP_\eps$ and $\hT_\eps$ are plotted in Figure~\ref{fig:pressure}.
This system is a reformulation in Lagrangian coordinates of the one-dimensionnal compressible Euler equations:
\[
\begin{cases}
\partial_t \varrho + \partial_y (\varrho u) = 0, \\
\partial_t(\varrho u) + \partial_y(\varrho u^2) + \partial_y \tilde{\mathcal{P}}_\eps(\varrho) = 0.
\end{cases}
\]

\begin{figure}
	\centering {\includegraphics[scale=0.45]{./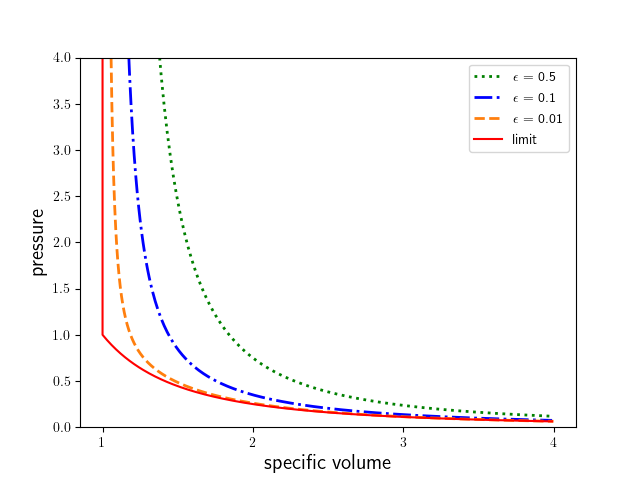}
 \includegraphics[scale=0.45]{./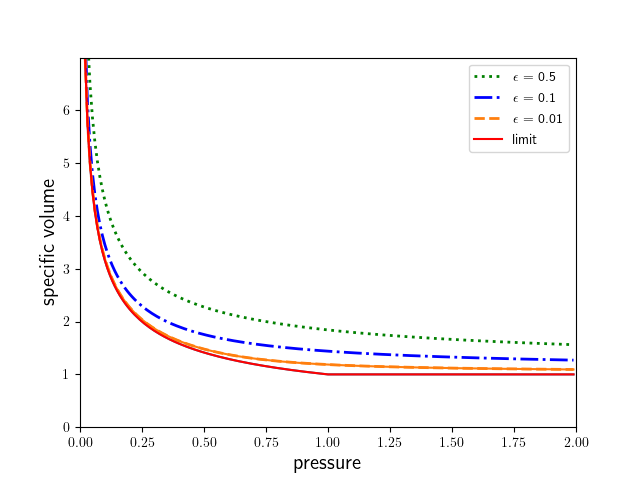}}
	\caption{Behavior of the pressure $\hP_\eps(\tau)$ (on the left) and its inverse $\hT_\eps$ (on the right) as $\eps \to 0$.}
	\label{fig:pressure}
\end{figure}

We are interested in the rigorous justification of the limit as $\eps \to 0$ towards the target system
\begin{subnumcases}{\label{eq:psystem-lim}}
\partial_t \hT(p) - \partial_x u = 0, \\
\partial_t u + \partial_x p = 0,
\end{subnumcases}
where 
\begin{equation}\label{eq:T-lim}
\hT(p) = \begin{cases}
        \big(\kappa p^{-1}\big)^{1/\gamma_i} =: \hT^{i}(p) \quad & \text{if} \quad  p < \kappa, \\
        1 \quad & \text{if} \quad p \geq \kappa.
\end{cases}
\end{equation}
The domain $\{\tau =1\}$ is said to be \textit{congested}.
The systems~\eqref{eq:psystem-ep} and~\eqref{eq:psystem-lim} are indeed used to model congestion, or saturation, effects. 
At the level of the approximate system~\eqref{eq:psystem-ep}-\eqref{eq:T-ep}, the singular pressure law $\hP_\eps$ (or equivalently $\tilde\hP_\eps(\varrho)\doteq \hP_\eps(\varrho^{-1})$) models some repulsive forces preventing the specific volume $\tau$ to take values below the threshold $\tau^\star = 1$, like standard penalty methods. 
This type of model is called in the literature ({\it c.f.}~\cite{maury2011}) \textit{soft congestion model}.
At the level of the limit system~\eqref{eq:psystem-lim}, the specific volume law $\hT$ saturates, i.e. attains its minimal value, at the value $\tau^\star$. 
This type of model is called in the literature \textit{hard congestion model}.

These types of models (or their reformulations) can be used in numerous contexts.
\begin{itemize}
    \item \textit{Modeling of mixtures}: The Eulerian version of~\eqref{eq:psystem-lim} has been originally derived in~\cite{bouchut2000}.
    It is seen as an asymptotic model of biphasic liquid-gas equations when the ratio between the reference gas and liquid densities tends to $0$.\\
    At the approximate level, $\eps > 0$, singular pressure laws such as $\hP_\eps$ are sometimes called \textit{hard sphere potentials} and used for granular mixtures (see for instance~\cite{carnahan1969}).

    \item \textit{Modeling of crowds and vehicular traffic}: In this context, it is natural to consider singular pressure laws to model short-range repulsive social interactions, see for instance~\cite{degond2011},~\cite{maury2011} or~\cite{bresch2017}. \\
    In the context of vehicular traffic, the famous Aw-Rascle-Zhang (ARZ) model can also include such singular potential in the offset function as proposed in Berthelin et al~\cite{BDDR}.
    The hard congestion version of the ARZ model has been justified through a particle approximation (so called Follow-The-Leader model) by Berthelin and Goatin in~\cite{berthelin2017FTL}. 
    The limit $\eps \to 0$ between the soft congestion ARZ and the hard congestion ARZ systems has been studied numerically in Berthelin et al.~\cite{berthelin2017}.  

    \item \textit{Modeling of wave-structure interactions and partially free surface flows}: Several systems have been recently derived and analyzed to model flows in closed pipes in the one hand (see~\cite{bourdarias2012} and~\cite{godlewski2018}), and floating objects on the other hand (see for instance~\cite{godlewski2020}, \cite{godlewski2018}).
    There is indeed a structural analogy between compressible Euler equations (and so~\eqref{eq:psystem-lim}) and the Shallow Water system by identifying $\varrho$ with $h$, the height of the flow. 
    In the above studies, the height of the flow is constrained by a roof of a channel or a floating structure. 
    The isentropic component $\kappa \varrho^{\gamma_i}$ appearing in~\eqref{eq:T-ep} can be then understood as the hydrostatic pressure in the Shallow Water equations.
\end{itemize}
From the mathematical standpoint, the singular limit $\eps \to 0$ from soft-congestion systems towards hard congestion systems has been previously studied in various frameworks, in particular: 
a strong $\mathcal{C}^1$ (local-in-time) setting in~\cite{bianchini2021}, and a weak setting (namely global-in-time finite energy weak solutions) when additional viscosity is taken into account, see for instance~\cite{perrin2015}, or at the level of the Riemann problems in~\cite{degond2011} (Appendix A). 
Weak solutions to the Eulerian version of~\eqref{eq:psystem-lim}, when $\kappa=0$, have been constructed by other means: through a discrete (sticky blocks) approximation in~\cite{berthelin2002}, via an convex optimization point of view in~\cite{perrin2018}. 
The interested reader is referred to the survey paper~\cite{perrin2018-survey} for more references on the subject.

\medskip
In this note, we present a convergence result for (global-in-time) solutions with BV regularity. 
This framework is, to some extent, natural in view of the hyperbolic nature of system~\eqref{eq:psystem-ep} and the general theory developed by
Di Perna, Risebro, Bressan and collaborators (see~\cite{bressan2000} and references therein) around the construction of such solutions via \textit{Wave Front Tracking} algorithms for hyperbolic systems of conservation laws.

It is natural in the BV setting to look for solutions lying in a neighborhood of a reference partially congested solution.
Hence a given reference solution $(\pref,\uref)$ of~\eqref{eq:psystem-lim} is approximated at $\eps > 0$ by  a solution $(\pref_\eps,\uref_\eps)$ to~\eqref{eq:psystem-ep}. 
Our goal is to construct BV solutions $(p_\eps,u_\eps)$ in the neighborhood of $(\pref_\eps,\uref_\eps)$ via the wave front tracking method and, for $\eps \to 0$, to extract a subsequence $(p_\eps,u_\eps)_\eps$ converging weakly towards a weak entropy (partially congested) solution $(p,u)$ of the limit system~\eqref{eq:psystem-lim}.
Moreover, we aim at characterizing the solution in the congested domain and the dynamics of the interface between the free domain and the congested one.

\medskip
Our result is the first global-in-time convergence result of the "hard congestion" type in an inviscid setting.
To some extent, the singular limit $\eps \to 0$ shares similarities with the famous low Mach number limit characterizing the transition between a compressible regime and an incompressible one (see for instance~\cite{alazard2008}).
Indeed system~\eqref{eq:psystem-ep} corresponds to a fully compressible system, while system~\eqref{eq:psystem-lim} is a mixed compressible/incompressible system since incompressibility condition, $\partial_x u =0$, holds in the congested domain where $\tau =1$.
Wave Front Tracking methods have been previously used for studying the low Mach number limit.
In particular, in the works of Colombo, Guerra and Schleper~\cite{colombo2016ARMA, colombo2016JHDE}, the low Mach number limit in a biphasic system with separated  gas (compressible) and liquid (nearly incompressible) phases, is analyzed.
Our study relies on a similar formulation in terms of pressure and velocity (instead specific volume/velocity for the classical p-system), and faces the same type of difficulties related to the unbounded speed of propagation of waves in the incompressible (congested) phase. 
However it strongly differs from~\cite{colombo2016ARMA, colombo2016JHDE} on the treatment of the interface between the compressible/free phase and the incompressible/congested phase.
In~\cite{colombo2016ARMA, colombo2016JHDE}, the gas and liquid phases are supposed to be immiscible, which means that there is no mass exchange between the two regions and the volume of the liquid, incompressible, phase remains constant.
From the mathematical point of view, it means that the interface is stationary in the Lagrangian mass coordinates.
This property is not satisfied by the free-congested system~\eqref{eq:psystem-lim}. 
There are obviously mass exchanges between the free region and the congested one, and the interface is a free boundary that is related to the trace of the solution from both sides of the interface (see~\eqref{eq:interface} below, and the study~\cite{iguchi2019} on the notion of fully nonlinear boundary condition). 
The analysis of the evolution and interactions of the free/congested interfaces in the approximate system and in the limit hard congestion model is one of the main novelties of our approach, which relies on sharp BV estimates
and on appropriate rescaling of the singular pressure law and of the corresponding specific volume function in the congested region.

\medskip
Let us now describe more in details our framework and the main results. We will be tracking the evolution of \textit{small BV perturbations} of a reference state constituted by a single propagating \textit{large interface} front.

\paragraph{Reference solution formed by a single discontinuity interface and functional setting.}
We choose $p_{0,1}, p_{0,2}$, $u_{0,1}, u_{0,2}$ such that
\begin{equation}\label{df:U0ref}
p_{0,2} < \kappa < p_{0,1}, \qquad
u_{0,2} < u_{0,1},
\end{equation}
and
\begin{equation}\label{eq:RH-lim-ref}
p_{0,2} - p_{0,1} = - \dfrac{(u_{0,2} - u_{0,1})^2}{1- \hT^i(p_{0,2})}.
\end{equation}
Such conditions ensure that $(p_{0,2},u_{0,2})$ is a free state that is the right state of a discontinuity interface of the second family with left state $(p_{0,1},u_{0,1})$:
\begin{equation}
\label{eq:ref-sol-1}
(\pref,\uref)(t,x) \doteq 
\begin{cases}
(p_{0,1},u_{0,1}) & \quad \text{if} \quad x < \bar \lambda_2 t, \\
(p_{0,2},u_{0,2}) & \quad \text{if} \quad x > \bar \lambda_2 t,
\end{cases}
\end{equation}
The reference solution \eqref{eq:ref-sol-1} is a discontinuity interface, which is partially congested on the left and travels to the right with speed
\begin{equation}\label{RH-lim}
\bar \lambda_2 \doteq  - \dfrac{u_{0,2} - u_{0,1}}{\hT(p_{0,2}) - 1} > 0.
\end{equation}
{\it Approximated reference solution.}
The first step is to construct the initial data of the approximate model \eqref{eq:psystem-ep} as a suitable approximation (in $\eps$) of the original reference solution \eqref{eq:ref-sol-1}.
For fixed $\eps > 0$, we choose $(p_{0,1}^\eps, u_{0,1}^\eps)$ and $(p_{0,2}^\eps, u_{0,2}^\eps)$ as follows.
\begin{align*}
  p_{0,1}^\eps = p_{0,1}> \kappa, \\
  p_{0,2}^\eps < \kappa, \quad \text{with} \quad p_{0,2}^\eps \to p_{0,2} \text{ as } \eps \to 0,
\end{align*}
and
\[
u_{0,1}^\eps = u_{0,1} > u_{0,2}^\eps,
\]
such that
\begin{equation}\label{eq:RH-eps-ref}
p_{0,2}^\eps - p_{0,1}^\eps = - \dfrac{(u_{0,2}^\eps - u_{0,1}^\eps)^2}{\hT_\eps(p_{0,1}^\eps)- \hT_\eps(p_{0,2}^\eps)}.
\end{equation}
We observe that the above conditions, together with~\eqref{eq:T-ep}, imply that $u_{0,2}^\eps \to u_{0,2}$ as $\eps \to 0$.

\noindent The initial data are represented in Figure~\ref{fig:sol-ref}.
\begin{figure}
    \centering
    \includegraphics[scale=0.7]{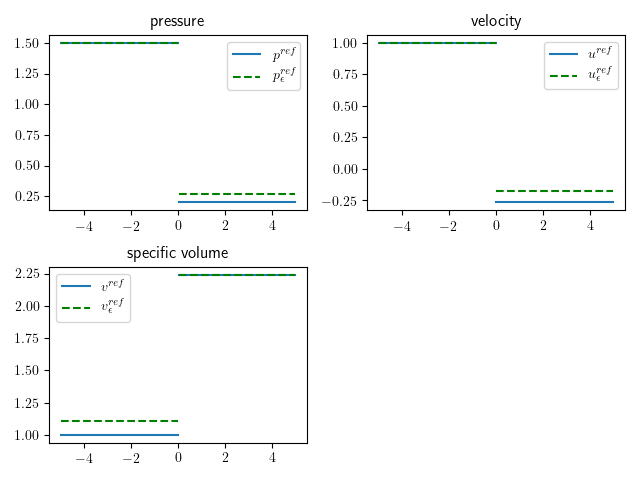}
    \caption{The reference solution is a 2-shock interface. Plain line: limit reference solution; dotted line: $\eps$-reference solution.}
    \label{fig:sol-ref}
\end{figure}

\bigskip
{\it Notation and convention} Here, and throughout the paper, we use the following notation.
\begin{itemize}
\item For a function $g\in {\bf BV}(\R)$, and for any $-\infty\leq a<b\leq +\infty$, introducing the notation
\begin{equation*}
    \widetilde g(x):=
    \begin{cases}
    \displaystyle{\lim_{x\to a+}}g(x)
    \quad &\text{if}\qquad x\leq a,\\
    \noalign{\smallskip}
    g(x)\quad &\text{if}\qquad a<x<b,\\
     \noalign{\smallskip}
        \displaystyle{\lim_{x\to b-}}g(x)
\quad &\text{if}\qquad x\geq b,
    \end{cases}
\end{equation*}
we denote
\begin{equation}
    \text{TV}(g, \, [a,b])\doteq \text{TV}(\,\widetilde g\,),
\end{equation}
 the total variation of the restriction of the function $g$ to the interval $[a,b]$.
Moreover, we will use the notation
$g(a\pm)\doteq \lim_{x\to a\pm}g(x)$
for the one-side limits of $g$ in $a$.
For any set $A$ we let $\overset{\ \circ}{A}$ denote the interior of $A$.
We will also use the notation
$\R^+\doteq \,]0, +\infty[$.
\item The notation $\sigma_\alpha:=p_r-p_\ell$ denotes a wave located at $x_\alpha (t)$ and connecting the left and right states $\hU^\ell=(p_\ell, u_\ell), \, \hU^r=(p_r, u_r)$.
\end{itemize}

\bigskip
{\it Functional setting.}
Given $\delta, \eps > 0$ and a curve $(\bar x^\eps(t))_{t \geq 0}$, with $\bar x^\eps(0) = 0$, we set
\begin{align}
    \Omega^{\delta,\eps}_i &\doteq \ ]p_{0,i}^\eps -\delta, p_{0,i}^\eps + \delta[ \ \times \ ]u_{0,i}^\eps -\delta, u_{0,i}^\eps + \delta[ \ , \quad i=1,2, \notag\\
    I_{1,t}^\eps &\doteq ]-\infty, \bar x^\eps(t)], \quad I_{2,t}^\eps \doteq [\bar x^\eps(t), +\infty[ \ .
\end{align}
At $\eps$ fixed, we will look for solutions $(p^\eps(t, \cdot), u^\eps(t,\cdot))$ of~\eqref{eq:psystem-ep} belonging for a time $t\geq 0$ to the domain
\begin{equation}
\label{eq:set-solns-eps}
\begin{aligned}
    \mathcal D^{\delta,\eps}_t&\doteq 
    \Big\{
    (p, u)
    \in {\bf BV}(\R; \R^+\times\R)\, :\
    (p, u) (x)\in\ \Omega_i^{\delta,\eps}\quad
    \forall~x\in \overset{\circ}{I^\eps_{i,t}},\\
    &\hspace{1.8in}
    \sum_{i=1}^2\text{TV}\big(p,\, I^\eps_{i,t}\big)
    + \frac{1}{\eps^{\frac{1}{2\gamma}}}\,\text{TV}\big(u,\, I^\eps_{1,t}\big)
    + \text{TV}\big(u,\, I^\eps_{2,t}  \big)
    <\delta
    \Big\}.
\end{aligned}
\end{equation}
As $\eps \to 0$, the limit $(p(t,\cdot), u(t,\cdot))$ is expected to belong to the domain 
\begin{equation}
\label{eq:set-solns-lim}
\begin{aligned}
    \mathcal D^{\delta}_t&\doteq 
    \Big\{
    (p, u)
    \in {\bf BV}(\R; \R^+\times\R):\
    (p, u) (x)\in\ \Omega_i^{\delta} ~~
    \forall~x\in \overset{\circ}{I_{i,t}}, 
    ~u(x) \equiv u^c =\text{const.}\; \forall x \in \overset{\circ}{I_{1,t}}\\
    &\hspace{1.8in}
    \sum_{i=1}^2\text{TV}\big(p,\, I_{i,t}\big)
    + \text{TV}\big(u,\, I_{2,t}  \big)
    <\delta
    \Big\},
\end{aligned}
\end{equation}
where 
\begin{align}
    \Omega^{\delta}_i \doteq \ ]p_{0,i} -\delta, p_{0,i} + \delta[ \ \times \ ]u_{0,i} -\delta, u_{0,i} + \delta[ \ , \quad i=1,2, \\
    I_{1,t} \doteq ]-\infty, \bar x(t)], \quad I_{2,t} \doteq [\bar x(t), +\infty[ \ .
\end{align}

\medskip
\paragraph{Definition of entropy weak solutions and main results.}
As usual in the context of weak distributional solutions, we will be asking a stronger characterization in terms of suitable \textit{admissibility conditions} to be satisfied by the weak distributional solutions of~\eqref{eq:psystem-ep} and~\eqref{eq:psystem-lim}.
First, we introduce the notion of pair of entropy/entropy flux.
\begin{definition}
A continuously differentiable function
$\eta=\eta(\tau,u)$ is called:
\begin{itemize}
    \item[-] an entropy for~\eqref{eq:psystem-ep}, with entropy flux $q=q(p,u)$ being a continuously differentiable function, if there holds
\begin{equation}
\partial_\tau\eta(\hT_{\eps}(p), u)= - \partial_u q(p,u),\qquad\quad
\partial_u\eta(\hT_{\eps}(p), u)= \partial_p q(p,u)
\qquad\forall~p\in\R^+, u\in \R\,;
\end{equation}
 \item[-] an entropy for~\eqref{eq:psystem-lim}, with entropy flux $q=q(p,u)$ being
a continuously differentiable function, if there holds
\begin{equation}
\partial_\tau \eta(\hT(p), u)= - \partial_u q(p,u),\qquad\quad
\partial_u\eta(\hT(p), u)= \partial_p q(p,u)\qquad\forall~p\in\R^+, u\in \R\,.
\end{equation}
\end{itemize}
A pair $(\eta(v,u), q(p,u))$ is called a convex entropy/entropy flux pair for ~\eqref{eq:psystem-ep} (resp. for for~\eqref{eq:psystem-lim})
if $\eta$ is a convex map and it is an entropy for~\eqref{eq:psystem-ep} (resp. for~\eqref{eq:psystem-lim}), with associated entropy flux~$q$.
\end{definition}
\begin{definition}[Entropy weak solution of soft-congestion p-system]
\label{entr-sol-def-1}
A function 
\[
\hU\doteq (p, u):[0,+\infty[\,\times\,\R\to
\R^+\times \R,
\]
is said an entropy weak solution of 
the Cauchy problem for~\eqref{eq:psystem-ep}
with initial datum~$(p_\text{in}, u_\text{in})
    \in {\bf BV}(\R; \R^+\times~\R)$
if the following holds:
\begin{enumerate}[label=(\roman*)]
    \item the map $t\mapsto \hU(t)$, $t\geq 0$, is continuous 
    as a function with values in ${\bf L^1_{loc}}(\R; \R^+\times~\R)$ and the initial condition
    \begin{equation}\label{eq:initial-datum}
    (p,u)(0, \cdot)=(p_\text{in}, u_\text{in}),
    \end{equation}
    is satisfied.
    \item $\hU$ is a weak distributional solution of~\eqref{eq:psystem-ep} on $\R^+\times\R$, that is, for any test function $\varphi\in\mathcal{C}^1_c$ with compact support contained in $\R^+\times\R$, there holds
    \begin{equation*}
        \int_{\R^+}\int_{\R}
        \big\{\hT_{\eps}(p)\,\partial_t\varphi-u\,\partial_x \varphi\big\}dx dt =0\,,
        \qquad\quad
        \int_{\R^+}\int_{\R}
        \big\{u\,\partial_t \varphi +p\,\partial_x \varphi\big\}dx dt =0\,.
         \end{equation*}
         \item for every pair $(\eta, q)$ of convex entropy/entropy flux for ~\eqref{eq:psystem-ep}, and          for any non-negative test function $\varphi\in\mathcal{C}^1_c$ with compact support contained in $\R^+\times\R$, there holds
        \begin{equation*}
        \int_{\R^+}\int_{\R}
        \big\{\eta\big(\hT_{\eps}(p), u\big)\,\partial_t \varphi
        + q(p,u)\,\partial_x \varphi\big\}dx dt \geq 0\,.
         \end{equation*}
\end{enumerate}
\end{definition}
\begin{definition}[Entropy weak solution of the hard-congestion p-system]
\label{entr-sol-def-2}
A function 
\[
\hU\doteq (p, u):[0,+\infty[\,\times\,\R\to\R^+\times \R,
\]
is said an entropy weak solution of 
the Cauchy problem for~\eqref{eq:psystem-lim} with initial datum~$(p_\text{in}, u_\text{in})\in \mathcal D^{\delta}_0$
if there holds:
\begin{enumerate}[label=(\roman*)]

         \item{\label{item:ID}} the map $t\mapsto \hU(t)$, $t\geq 0$, is continuous as a function with values in ${\bf L^1_{loc}}(\R; \R^+\times~\R)$ and it satisfies the initial condition
 
         \begin{equation}
         \label{eq:initial-datum-3}
        (p,u)(0,\cdot)=\text{In}(p_\text{in}, u_\text{in}), 
        \end{equation}
        where 
        \begin{equation}\label{eq:in-function}
        \text{In}(p_\text{in}, u_\text{in})(x)
        \doteq
        \begin{cases}
        (p_\text{in}, u_\text{in})(x)\qquad &\text{if}\quad x >\overline x(0), \\
        \noalign{\smallskip}
        \big(p_\text{in}^c,u_\text{in}^c\big) \qquad &\text{if}\quad x <\overline x(0),
        \end{cases}
        \end{equation}
with 
        \begin{equation}
         p_\text{in}^c 
         \doteq
         p_\text{in}(\overline{x}(0)+)
         + \dfrac{\big(u_\text{in}^c - u_\text{in}(\overline{x}(0)+)\big)^2}
         {\hT(p_\text{in}(\overline{x}(0)+)) - 1},
        \end{equation}
        and where $u_\text{in}^c$ is the constant value taken by the initial velocity $u_\text{in}$ in the congested domain as in \eqref{eq:set-solns-eps}.
    \item
    $\hU$ is a weak distributional solution of~\eqref{eq:psystem-lim} on $\R^+\times\R$, that is, for any test function $\varphi\in\mathcal{C}^1_c$ with compact support contained in $\R^+\times\R$, there holds
    \begin{equation*}
        \int_{\R^+}\int_{\R}
        \big\{\hT(p)\,\partial_t \varphi-u\,\partial_x \varphi\big\}dx dt =0\,,
        \qquad\quad
        \int_{\R^+}\int_{\R}
        \big\{u\,\partial_t \varphi+p\,\partial_x \varphi\big\}dx dt =0\,.
         \end{equation*}
         \item For every pair $(\eta, q)$ of convex entropy/entropy flux for~\eqref{eq:psystem-lim}, and for any non-negative test function $\varphi\in\mathcal{C}^1_c$ with compact support contained in $\R^+\times\R$, there holds
        \begin{equation*}
        \int_{\R^+}\int_{\R}
        \big\{\eta\big(\hT(p), u\big)\,\partial_t \varphi
        + q(p,u)\,\partial_x \varphi\big\}dx dt \geq 0\,.
         \end{equation*}
\end{enumerate}
\end{definition}

\medskip

\begin{remark}\label{rmk:ID-hard}
Notice from~\ref{item:ID} in Definition \ref{entr-sol-def-2} that the initial pressure $p_\text{in}$ is in general not attained by the solution in the congested region, because of the infinite propagation speed of waves of the congested domain, see Remark~\ref{rmk:inf-wave} below. This is the reason why the initial datum needs to be redefined by means of the function $\text{In}(p_\text{in}, u_\text{in})$ in \eqref{eq:in-function}.
The new congested initial datum $(p^c_\text{in}, u^c_\text{in})$ provided by such function $\text{In}(p_\text{in}, u_\text{in})$
is precisely the left 
state of a 2-discontinuity interface with right 
state $(p^r,u^r) \doteq 
\big(p_\text{in}(\overline x(0)+), 
u_\text{in}(\overline x(0)+) \big)$.
The intermediate state $(p^m,u^m) \doteq \big(p_\text{in}^c, u_\text{in}(\overline{x}(0)-)\big)$ can be seen as the unique state such that:
\begin{itemize}
     \item[-] $(p^m,u^m)$ is the right state of a 1-wave with infinite speed with left state \\ $(p^l,u^l)\doteq \big(p_\text{in}(\overline x(0)-),     u_\text{in}(\overline x(0)-)\big)$;
     \item[-]
     $(p^m,u^m)$ is the left state of a 2-discontinuity interface with right state\\ $(p^r,u^r)\doteq \big(p_\text{in}(\overline x(0)+), u_\text{in}(\overline x(0)+)\big)$. 
\end{itemize}
It is also important to note that if two states  are connected by a wave of infinite speed, then they have the same velocity (the velocity is constant in the congested domain).
This is why we only need to define the congested pressure $p_\text{in}^c$. Indeed for the velocity we simply have $u^m = u^l = u_\text{in}(\overline{x}(0)-)=u^c_\text{in}$.
\end{remark}

\medskip

We state now our main result.
\begin{theorem}\label{thm:main}
Let $(p^\text{ref},u^\text{ref})$ satisfy~\eqref{df:U0ref}-\eqref{RH-lim}, and $\delta_0, \overline\eps_0>0$.
\begin{itemize}
    \item There exist constants $0< \delta_1<\delta_0$, $0<\overline\eps_1<\overline\eps_0$, so that, for any  $0<\eps<\overline\eps_1$, and for every initial datum $(p_\text{in}, u_\text{in})$ in the domain  $\mathcal{D}^{\delta_1,\eps}_0$ defined as in~\eqref{eq:set-solns-eps}, the Cauchy problem~\eqref{eq:psystem-ep}-\eqref{eq:initial-datum} admits an entropy weak solution $\hU^\eps = (p^\eps, u^\eps)$ in the sense of Definition~\ref{entr-sol-def-1} which satisfies $\hU^\eps(t) \in \mathcal{D}^{\delta_0,\eps}_t$.
    Moreover, there exists a Lipschitz continuous curve $\overline x^\eps : [0,+\infty)\to \R$, with $\overline x^\eps(0)= 0$, representing the interface between the congested domain $I^\eps_{1,t}$ and the free domain $I^\eps_{2,t}$, such that:
    \begin{equation}
    \label{eq:interf-speed-bound}
        \bar \lambda_2 - \delta_0 \leq \dot{\overline x}^\eps(t) \leq \bar \lambda_2 + \delta_0, \qquad\text{for \ a.e.}\ t>0,\quad \forall~\eps>0\,
    \end{equation}
    where $\bar \lambda_2$ is the speed of the reference propagating front defined in~\eqref{RH-lim}.
    \item  Consider an initial datum $\hU_\text{in}\doteq (p_\text{in}, u_\text{in})$ in the domain $\mathcal{D}^{\delta_1}_0$ defined as in~\eqref{eq:set-solns-lim}, and let $\hU^\eps_\text{in}\doteq (p_\text{in}^\eps, u_\text{in}^\eps)\in \mathcal{D}^{\delta_1,\eps}_0$, $\eps > 0$, be initial data such that
    \begin{equation}\label{eq:data-eps-approx-limdatum}
    \hU^\eps_\text{in}\to \hU_\text{in} \qquad\text{in}\qquad {\bf L^1_{loc}}(\R;\,\R^+\times\R) \qquad\text{as}\quad\eps\to 0\,.
    \end{equation}
    Let $\hU^\eps = (p^\eps, u^\eps)$ denote the entropy weak solutions of the Cauchy problem~\eqref{eq:psystem-ep} with initial datum~$\hU^\eps_\text{in}$, and $\overline x^\eps: [0,+\infty)\to \R$, with $\overline x^\eps(0)=0$, denote the corresponding discontinuity interface.
    Then there exist functions
    \begin{equation}
    \hU^*\doteq (p^*, u^*):[0,+\infty[\,\times\,\R\to \R^+\times \R\,,\qquad
    \overline x:[0,+\infty[\,\to\R\,,
    \end{equation}
    so that
    \begin{itemize}
    \item[(i)] 
    up to a subsequence, as $\eps\to 0$, one has:
    \begin{equation}
    \begin{aligned}
    \label{eq:conv-gamma-eps-1}
        \qquad
        \overline x^\eps \to \overline x \qquad &\text{uniformly on every}\ \  [0, T], \, T>0,\\
        \dot{\overline x}^\eps(t) \to \dot{\overline x}(t) \qquad &\text{for a.e.}\quad t\in\R^+.
    \end{aligned}
    \end{equation}
    \smallskip
    \begin{equation}\label{eq:conv-u-eps}
        u^\eps \to u^* \qquad \text{strongly in}\qquad {\bf L^1_{loc}}(\R_+ \times \R; \R),
    \end{equation}
  \begin{equation}
    \label{eq:conv-p-}
     p^\eps(\cdot,\cdot)
          \rightharpoonup p^*(\cdot, \cdot)
          \qquad \text{weakly-* in}\qquad
          {\bf L^\infty}(\R_+ \times \R; \R_+),
    \end{equation}
    and $\dot{\bar x}$ satisfies inequality~\eqref{eq:interf-speed-bound}.

    \item[(ii)] 
    $\hU^*$ is an entropy weak solution of the Cauchy problem~\eqref{eq:psystem-lim}-\eqref{eq:initial-datum}
    in the sense of Definition~\ref{entr-sol-def-2} and
      \begin{equation}
    \label{eq:tv-u-p-v-4}
        \hU^*(t)\in 
    \mathcal{D}^{\delta_2}_t\qquad\forall~ t>0\,.
    \end{equation}
    
      \item[(iii)] In the congested domain, i.e. for a.e. $(t,x) \in \underset{t \geq 0}{\bigcup} \{t\} \times I_{1,t}$, the dynamics is given by:
      \begin{align}
      u^*(t,x)&=  u_c(0), \\
      \dot{\bar x}(t) & = \dfrac{u_c(0) - u(t, \bar x(t)+)}{\tau(t,\bar x(t) +) - 1}, \label{eq:interface}\\
      p^*(t,x)&= p_c(t) = p(t,\bar x(t)+) +\dfrac{\Big( u_c(0) - u(t, \bar x(t)+) \Big)^2}{\tau(t,\bar x(t) +) - 1}.
      \end{align}
\end{itemize}
\end{itemize}
\end{theorem}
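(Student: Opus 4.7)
The plan is to carry out the three items of the theorem in sequence: (a) build the $\eps$-solution $\hU^\eps$ by a wave front-tracking scheme with uniform-in-$\eps$ BV bounds; (b) extract a subsequential limit as $\eps\to 0$ and verify the properties in Definition \ref{entr-sol-def-2}; (c) derive the explicit congested dynamics.

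\medskip

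\textbf{(a) Front tracking and weighted Glimm estimate.} For each fixed $\eps>0$ I approximate $(p_\text{in},u_\text{in})\in \mathcal{D}^{\delta_1,\eps}_0$ by a piecewise constant function whose jumps are either the large 2-shock at $x=0$ or small ones valued in $\Omega^{\delta_1,\eps}_1\cup\Omega^{\delta_1,\eps}_2$, solve Riemann problems along the Lax curves of \eqref{eq:psystem-ep} with rarefactions replaced by fans and non-physical fronts inserted when required, and track the large 2-front as a single curve $\overline x^\eps(t)$. The decisive point is the non-increase of a weighted Glimm functional
\[
\Upsilon^\eps(t)\doteq V^\eps(t)+C\,Q^\eps(t),\qquad V^\eps(t)\doteq\!\!\sum_{\alpha\in\text{cong.}}\!\bigl(|\sigma^p_\alpha|+\eps^{-\frac{1}{2\gamma}}|\sigma^u_\alpha|\bigr)+\!\!\sum_{\alpha\in\text{free}}\!\bigl(|\sigma^p_\alpha|+|\sigma^u_\alpha|\bigr)+c_0,
\]
where $Q^\eps$ is a quadratic interaction potential summing products of approaching small fronts and products of each small front with the large 2-front. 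The weight $\eps^{-1/(2\gamma)}$ is dictated by the characteristic speeds $\sqrt{-\hP'_\eps(\tau)}\sim \eps^{-1/(2\gamma)}$ in the congested region (from \eqref{eq:T-ep} with $\tau\approx 1$): it makes reflection and transmission coefficients at the large 2-interface uniformly bounded and matches the Rankine-Hugoniot relations across congested small jumps. Interactions away from $\overline x^\eps$ produce the classical quadratic decrease of $Q^\eps$; interactions with the large front require a dedicated computation using sharp expansions of the Lax curves near $\tau=1$, and produce a quadratic decrease uniform in $\eps$. This yields $\hU^\eps(t)\in\mathcal D^{\delta_0,\eps}_t$ and, by the Rankine-Hugoniot condition at the tracked large front perturbed by small waves on both sides, the Lipschitz estimate \eqref{eq:interf-speed-bound} on $\overline x^\eps$.

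\medskip

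\textbf{(b) Compactness and passage to the limit.} The family $\{\overline x^\eps\}$ being uniformly Lipschitz, Arzel\`a-Ascoli gives a subsequential uniform limit $\overline x$ with $\dot{\overline x}$ satisfying \eqref{eq:interf-speed-bound}; a.e. convergence of $\dot{\overline x}^\eps$ follows from the BV bound on the piecewise constant $\dot{\overline x}^\eps$ in the approximate scheme, controlled by $Q^\eps$. Because $\eps^{-1/(2\gamma)}\geq 1$ for small $\eps$, the weighted bound implies a uniform unweighted BV bound on $u^\eps(t,\cdot)$ globally in $x$, so Helly's theorem gives $u^\eps\to u^*$ in ${\bf L^1_{loc}}$; for $p^\eps$ the BV bound is only global outside the congested region, which yields only the weak-$*$ statement \eqref{eq:conv-p-}. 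To pass to the limit in \eqref{eq:psystem-ep}, I observe that $\hT_\eps(p^\eps)$ is BV in the free region (where $\hT_\eps\to\hT$ uniformly on compacts away from $p=\kappa$) and takes values in $[1,1+O(\eps^{1/\gamma_c})]$ in the congested region, so $\hT_\eps(p^\eps)\to \hT(p^*)$ strongly in ${\bf L^1_{loc}}$ and the first equation passes to the limit; the second equation passes by linearity using the weak-$*$ convergence of $p^\eps$. The entropy inequality passes by lower semicontinuity using convexity of $\eta$ in $(\tau,u)$, strong convergence of $(\hT_\eps(p^\eps),u^\eps)$, and the weak convergence for the corresponding flux. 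The redefinition of the initial datum in Remark \ref{rmk:ID-hard} encodes the instantaneous formation at $t=0+$ of a 2-interface emanating from $x=0$ with infinite-speed 1-front on the congested side.

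\medskip

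\textbf{(c) Congested dynamics and main obstacle.} In $I_{1,t}$ one has $\hT(p^*)\equiv 1$, so the first equation of \eqref{eq:psystem-lim} reduces to $\partial_x u^*=0$ and $u^*(t,\cdot)$ is spatially constant; the absence of a left boundary together with BV compactness force it to be also time-independent, giving $u^*\equiv u_c(0)$. The second equation then makes $p^*$ spatially constant as well, equal to some $p_c(t)$. Writing the Rankine-Hugoniot jump conditions for the two equations of \eqref{eq:psystem-lim} at $x=\overline x(t)$ with left state $(p_c(t),u_c(0))$, right trace $(p(t,\overline x(t)+),u(t,\overline x(t)+))$ and $\hT(p_c(t))=1$ yields exactly \eqref{eq:interface} and the expression for $p_c(t)$. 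The main obstacle is step (a): proving non-increase of $\Upsilon^\eps$ at interactions involving the large 2-front when $\eps\to 0$. The subtlety is that characteristic speeds on the congested side diverge like $\eps^{-1/(2\gamma)}$, so a single small wave crossing the interface suffers $O(\eps^{-1/(2\gamma)})$ amplification of its pressure component, and only the weight chosen in \eqref{eq:set-solns-eps} neutralizes this divergence in $V^\eps$. Verifying that the quadratic corrections in $Q^\eps$ dominate the error terms in the interaction estimates, uniformly in $\eps$, requires sharp second-order expansions of the Lax curves of \eqref{eq:psystem-ep} at $\tau=1$, and constitutes the technical core of the proof.
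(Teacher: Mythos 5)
Your proposal follows essentially the same route as the paper: a front-tracking construction controlled by a weighted Glimm functional whose $\eps^{-1/(2\gamma_c)}$ weight on the congested velocity variation is exactly the one built into $\mathcal D^{\delta,\eps}_t$, interaction estimates at the large 2-front, two-level compactness (first the front-tracking parameter, then $\eps\to 0$), and Rankine--Hugoniot relations at the limiting interface to identify the congested dynamics. The only notable (and harmless) deviations are that the paper writes the Glimm functional in terms of pressure strengths with $\eps$-independent weights and recovers the $\eps$-weighted total variation a posteriori through the Rankine--Hugoniot relation (Lemma~\ref{lem:WTV}), and that it obtains the one-sided traces at the interface via dedicated Lipschitz-in-space estimates (Lemma~\ref{lem:Lip_x}) rather than arguing directly on the limit system as you do.
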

\bigskip
\begin{remark} 
\begin{itemize}
    \item As said in the introduction, we rely on a reformulation of the p-system in $(p,u)$, similarly to Colombo and Guerra in~\cite{colombo2016JHDE}.
    In that latter, the use of the pressure variable (instead of the specific volume) has the clear advantage that the pressure and the velocity are  continuous across the (fixed, stationary) interface. 
    This is not the case in the present study, all the quantities are indeed discontinuous across the moving interface $\bar x(t)$, which can be understood as a limit 2-shock wave. 
    This makes the identification of the limit traces across the interface very different from~\cite{colombo2016JHDE}.
    \item The use of the pressure variable, instead of the specific volume, allows us to characterize (see Definition~\ref{df:nature}) and describe in a quantitative manner (with respect to $\eps$) the dynamics of the congested zone, but more importantly, it allows a simple characterization of the limit solutions captured by limit $\eps \to 0$.
    Previously, \textit{hard congestion systems} have been set in terms of specific volume and velocity (see the models introduced in~\cite{perrin2018-survey}). 
    The congestion constraint was expressed through the following unilateral constraint (macroscopic version of the well-known \textit{Signorini's conditions} for contact problems):
    \begin{equation}
    \tau \geq 1, \quad (\tau - 1)\pi = 0, \quad \pi \geq 0,
    \end{equation}
    where the limit congestion pressure $\pi = \lim_\eps \big[\eps (\tau_\eps -1)^{-\gamma_c}\big]$ is seen as the Lagrange multiplier associated to constraint on the velocity field in the congested domain.
    In the formulation~\eqref{eq:psystem-lim}, the congestion constraint on the specific volume is explicitly included in the (inverse) equation-of-state $\hT$~\eqref{eq:T-lim}, and we can formulate a natural entropy criteria to select weak solutions (see Definition~\ref{entr-sol-def-2}).
    Note that the presence of an isentropic component in the pressure (i.e. $\kappa > 0$) is crucial to express the system in $(p,u)$, since it allows to express the specific volume as a function of the pressure in the free domain.
    \item Let us finally observe that the present setting and the use of the Wave Front Tracking algorithm allows us to "track" the dynamics of the interface for all times $t\geq 0$. 
    This is a strong improvement compared to the previous work of Bianchini \& Perrin~\cite{bianchini2021} where the limit $\eps \to 0$ is tackled by means of compactness methods and a weak ($L^1$) control of the pressure on a limited time interval (independent of $\eps$).
    The study of global-in-time solutions is also a important difference with the study of Iguchi and Lannes~\cite{iguchi2019} on the floating body problem. 
\end{itemize}
\end{remark}
\bigskip
In the rest of the notes, we give the key elements of the proof of Theorem~\ref{thm:main}.
In Section~\ref{sec:WFT}, we explain how to construct approximate wave front tracking solutions, $\hU^{\eps,\rho}$, that are piecewise constant for fixed $\rho > 0$.
In particular, we introduce the key weighted Glimm functional which yields important Lipschitz properties (Section~\ref{sec:propWFT}) to pass to the limit: first the WFT limit, $\rho \to 0$, in Section~\ref{sec:lim-rho} leading to the existence of entropy weak solutions $\hU^\eps$ to system~\eqref{eq:psystem-ep}, then the hard congestion limit, $\eps \to 0$, in Section~\ref{sec:lim-ep} leading to the existence of entropy weak solutions $\hU$ to system~\eqref{eq:psystem-lim}.
We finally present in Section~\ref{sec:concl} some extensions of Theorem~\ref{thm:main} to more complicated reference solutions $\hU^{ref}$ that are constituted by more than one interface.

\bigskip
For sake of brevity and clarity, we have made the choice to present only the key elements of the proof of Theorem~\ref{thm:main}, elements that are the most representative of the features of system~\eqref{eq:psystem-ep} and the singularities appearing in the hard congestion limit as $\eps \to 0$.
The interested reader will find in the forthcoming paper~\cite{ancona2022} a more detailed description of the proofs.

\section{Approximate solution constructed by a Wave Front Tracking algorithm}\label{sec:WFT}
A key advantage of working in pressure-velocity variables $(p,u)$ is an easy characterization of the nature of the states, which is provided below.
\begin{definition}[Nature of the states]\label{df:nature}
Let $\pfm,\pfp,\pcm,\pcp > 0$ be independent of $\eps$, and
\begin{equation*}
    \pfm < \pfp < \kappa < \pcm < \pcp.
\end{equation*}
Take $\delta_0, \bar\eps_0$ in Theorem~\ref{thm:main} such that
\begin{equation}
    \begin{cases}
    \pfm < p_{0,2}^\eps - \delta_0 < p_{0,2}^\eps + \delta_0 < \pfp ,\\
    \pcm < p_{0,1}^\eps - \delta_0 < p_{0,1}^\eps + \delta_0 < \pcp,
    \end{cases}
    \quad \text{for all} ~ \eps < \bar\eps_0 .
\end{equation}
A state $\hU =(p,u)$ is called
\begin{itemize}
    \item a \textit{free state}, denoted (F), if $p \in [\pfm,\pfp]$;
    \item a \textit{congested state}, denoted (C), if $p \in [\pcm,\pcp]$.
\end{itemize}
\end{definition}
\bigskip

\paragraph{Hyperbolicity of the soft congestion system.}
Here we collect some properties of the soft congested system \eqref{eq:psystem-ep}.
\begin{proposition}
System~\eqref{eq:psystem-ep} is strictly hyperbolic on the domain $\R_+ \times \R$ with eigenvalues:
\begin{equation}\label{eq:eigenvalues}
\lambda_1^\eps(p, u)=-\sqrt{-1/\hT'_\eps(p)}, \quad \lambda_2^\eps(p, u)=\sqrt{-1/\hT'_\eps(p)}.
\end{equation}
Moreover, the two associated characteristic fields are genuinely nonlinear on the domain $\R_+ \times \R$.
We denote $\mathcal{L}_i^\eps=(\hS_i^\eps, \hR_i^\eps)$ the Lax (shock and rarefaction) curves of the $i^{th}$ family associated with the eigenvalue $\lambda^i_\eps$. In the $(p, u)$ variables, the curves emanating from the left state $\hU^\ell = (p^\ell,u^\ell)$ are represented in Figure~\ref{fig:laxcurves} and read as follows:
\begin{align} \label{eq:laxcurves1}
\mathcal{S}_1^\eps(\sigma)(\hU^\ell): & \quad 
u=u^\ell-\sqrt{-(\hT_\eps(p)-\hT_\eps(p^\ell))(p-p^\ell)}, \quad \sigma = p - p^\ell > 0, \notag\\
\mathcal{R}_1^\eps(\sigma)(\hU^\ell):& \quad u=u^\ell-\int_{p^\ell}^p \sqrt{-\hT'_\eps(\xi)} \, d\xi, \quad \sigma < 0,
\end{align}
\begin{align} \label{eq:laxcurves2}
\mathcal{S}_2^\eps(\sigma)(\hU^\ell):& \quad u=u^\ell-\sqrt{-(\hT_\eps(p)-\hT_\eps(p^\ell))(p-p^\ell)}, \quad \sigma < 0, \notag\\
\mathcal{R}_2^\eps(\sigma)(\hU^\ell):& \quad u=u^\ell+\int_{p^\ell}^p \sqrt{-\hT'_\eps(\xi)} \, d\xi, \quad \sigma > 0. 
\end{align}
\end{proposition}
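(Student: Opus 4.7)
The argument is a routine $p$-system computation in the $(p,u)$ variables; the only thing to check is that the singular pressure law \eqref{eq:T-ep} does not disturb the classical sign structure.

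First, I would rewrite \eqref{eq:psystem-ep} in quasilinear form for $U=(p,u)^\top$: applying the chain rule to the first equation yields $\partial_t U + A_\eps(p)\,\partial_x U = 0$ with
\[
A_\eps(p)=\begin{pmatrix} 0 & -1/\hT'_\eps(p) \\ 1 & 0 \end{pmatrix}.
\]
The characteristic polynomial gives $\lambda^2=-1/\hT'_\eps(p)$, so strict hyperbolicity on $\R^+\times\R$ reduces to $\hT'_\eps(p)<0$ for every $p>0$. This is immediate from \eqref{eq:T-ep}: both summands of $\hP_\eps$ are strictly decreasing on $(1,+\infty)$, hence so is its inverse $\hT_\eps$ on $\R^+$. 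One recovers the two real, distinct eigenvalues $\lambda_{1,2}^\eps$ in \eqref{eq:eigenvalues}, with right eigenvectors $r_i^\eps(p)=(\lambda_i^\eps(p),1)^\top$. Genuine nonlinearity then follows from the identity
\[
\nabla_{(p,u)}\lambda_i^\eps\cdot r_i^\eps = \lambda_i^\eps\,\partial_p \lambda_i^\eps = \frac{\hT''_\eps(p)}{2\,(\hT'_\eps(p))^2},
\]
since $\lambda_i^\eps$ depends only on $p$; the sign is controlled by $\hT''_\eps(p)>0$, which one reads off \eqref{eq:T-ep} via the identity $\hT''_\eps(p)=-\hP''_\eps(\hT_\eps(p))/(\hP'_\eps(\hT_\eps(p)))^3$ together with the direct computation $\hP''_\eps(\tau)>0$ on $(1,+\infty)$.

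For the Lax curves, the rarefaction curves are the integral curves of $r_i^\eps$; parameterizing by $p$ produces the ODE $du/dp = 1/\lambda_i^\eps = \mp\sqrt{-\hT'_\eps(p)}$, whose integration gives the formulas for $\mathcal R_i^\eps$ in \eqref{eq:laxcurves1}--\eqref{eq:laxcurves2}. The shock curves come from the Rankine--Hugoniot conditions for \eqref{eq:psystem-ep}, namely $s\,[\hT_\eps(p)]=-[u]$ and $s\,[u]=[p]$, which on eliminating $s$ give $[u]^2=-[p]\,[\hT_\eps(p)]\geq 0$ and thus $u-u^\ell=\pm\sqrt{-(\hT_\eps(p)-\hT_\eps(p^\ell))(p-p^\ell)}$. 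The Lax entropy inequality $\lambda_i^\eps(\hU^r)<s<\lambda_i^\eps(\hU^\ell)$, combined with the signs $\partial_p\lambda_2^\eps>0$ and $\partial_p\lambda_1^\eps<0$ computed above, selects the minus branch in both families and restricts the admissible parameter range to $\sigma=p-p^\ell>0$ for $\mathcal S_1^\eps$ and $\sigma<0$ for $\mathcal S_2^\eps$; the same monotonicity forces $\sigma<0$ (resp.\ $\sigma>0$) for $\mathcal R_1^\eps$ (resp.\ $\mathcal R_2^\eps$), so that the characteristic speed increases in the direction of the parameter along each curve.

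No serious obstacle is expected. The singularity of $\hP_\eps$ as $\tau\to 1^+$ only translates into $\lambda_i^\eps(p)\to\pm\infty$ as $p\to+\infty$, and plays no role on the bounded pressure window used throughout the analysis (in particular inside any of the domains $\mathcal D^{\delta,\eps}_t$ introduced in \eqref{eq:set-solns-eps}). The only delicate bookkeeping is the sign tracking in the Lax admissibility step that fixes the four half-branches stated in \eqref{eq:laxcurves1}--\eqref{eq:laxcurves2}.
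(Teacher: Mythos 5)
The paper states this proposition without proof (it is a standard $p$-system computation in the $(p,u)$ variables), and your argument is exactly the routine verification the authors implicitly rely on: the quasilinear form, the sign checks $\hT'_\eps<0$ and $\hT''_\eps>0$ inherited from the monotonicity and convexity of $\hP_\eps$ on $(1,+\infty)$, and the standard derivation of the rarefaction and shock branches with Lax admissibility. All steps check out, so your proposal is correct and takes essentially the same (canonical) approach.
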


\begin{figure}
    \centering
    \includegraphics[scale = 0.45]{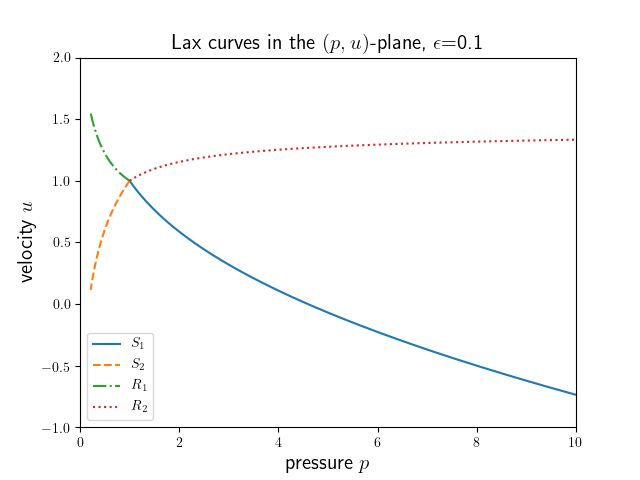}
    \includegraphics[scale = 0.45]{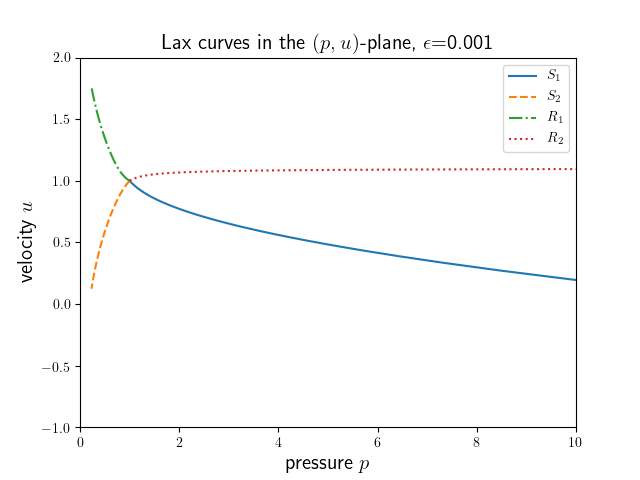}
    \caption{Lax curves emanating from the state $\hU^\ell = (1,1)$ are represented in the $(p,u)-$plane for $\eps=0.1$ (left) and $\eps = 0.001$ (right).
    We observe that the $\mathcal{R}_2$ rarefaction curve and the $\mathcal{S}_1$ shock curve tend to an horizontal line as $\eps \to 0$ which is characteristic of waves with infinite speed that connect states with same velocity but different pressures (Remark~\ref{rmk:inf-wave}).}
    \label{fig:laxcurves}
\end{figure}

\paragraph{Properties of the singular pressure.}
We observe that the behavior of the rarefaction and shock waves~\eqref{eq:laxcurves1}-\eqref{eq:laxcurves2} strongly depends on the size of $\hT_\eps'(p)$.
The next lemma provides bounds on $\hT_\eps' (p)$ according to the nature of the state.
\begin{lemma}\label{lem:pressure}
\begin{itemize}
   \item Let $\hU=(p,u)$ be a \textit{free state} \text{(F)} with $p \in [\pfm ,\pfp]$. Then:
   \begin{align}\label{eq:Tfree}
    C(\pfp) \leq  |\hT_\eps'(p)| \leq C(\pfm);
   \end{align}
    \item let $\hU=(p,u)$ be a \textit{congested state} \text{(C)} with  $p \in [\pcm ,\pcp]$. Then:
    \begin{align}\label{est:Tprime-cong}
    \eps^{\frac{1}{\gamma_c}}C(\pcp) \leq  |\hT_\eps'(p)| \leq \eps^{\frac{1}{\gamma_c}} C(\pcm),
   \end{align}
   where $C(\pfm), C(\pfp), C(\pcm), C(\pcp)$ are positive constants independent of $\eps$.
   \end{itemize}
\end{lemma}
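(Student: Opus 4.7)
The plan is to exploit the inverse function theorem: since $\hT_\eps=\hP_\eps^{-1}$, we have $\hT_\eps'(p)=1/\hP_\eps'(\tau)$ at $\tau=\hT_\eps(p)$, and $\hP_\eps'(\tau)=-\gamma_i\kappa/\tau^{\gamma_i+1}-\gamma_c\eps/(\tau-1)^{\gamma_c+1}$ is negative, so
\[
|\hT_\eps'(p)|=\Big(\gamma_i\kappa/\tau^{\gamma_i+1}+\gamma_c\eps/(\tau-1)^{\gamma_c+1}\Big)^{-1}.
\]
Everything then reduces to controlling $\tau$ and $\tau-1$ in terms of $\eps$ and the interval to which $p$ belongs. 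So the first step is to extract, from $\hP_\eps(\tau)=p$, two-sided bounds on $\tau$ depending on whether $p\in[\pfm,\pfp]$ or $p\in[\pcm,\pcp]$, both independent of $\eps$ in the first case and quantitatively $\eps$-dependent in the second.

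For the free case, from $\kappa/\tau^{\gamma_i}\le \hP_\eps(\tau)=p\le\pfp<\kappa$ I immediately obtain the uniform lower bound $\tau\ge(\kappa/\pfp)^{1/\gamma_i}>1$, so in particular $\tau-1$ is bounded below by a positive constant depending only on $\pfp$ and $\kappa$. For the upper bound on $\tau$, I use that $\eps/(\tau-1)^{\gamma_c}\le \eps$ as soon as $\tau\ge 2$, hence $\pfm\le p\le \kappa/\tau^{\gamma_i}+\eps$, which for $\eps\le\bar\eps_0$ small enough gives $\tau\le(2\kappa/\pfm)^{1/\gamma_i}$. Plugging these bounds into the formula for $\hP_\eps'(\tau)$ produces uniform constants $C(\pfp)$ and $C(\pfm)$ such that $|\hT_\eps'(p)|$ is bounded above and below, establishing~\eqref{eq:Tfree}.

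For the congested case the situation is more delicate because $\tau\to 1$ as $\eps\to 0$, and the main step is to show that $\tau-1$ behaves like $\eps^{1/\gamma_c}$. Writing $\eps/(\tau-1)^{\gamma_c}=p-\kappa/\tau^{\gamma_i}$ and using $\tau\ge 1$ together with $p\in[\pcm,\pcp]\subset(\kappa,+\infty)$, I obtain the two-sided sandwich
\[
\pcm-\kappa\le \frac{\eps}{(\tau-1)^{\gamma_c}}\le \pcp,
\]
hence $(\eps/\pcp)^{1/\gamma_c}\le \tau-1\le(\eps/(\pcm-\kappa))^{1/\gamma_c}$. Raising to the $\gamma_c+1$ power and inserting this into $\gamma_c\eps/(\tau-1)^{\gamma_c+1}$ yields, for $\eps$ small, that this singular term dominates $\gamma_i\kappa/\tau^{\gamma_i+1}$ (which remains of order one), and provides matching upper and lower bounds on $|\hP_\eps'(\tau)|$ of order $\eps^{-1/\gamma_c}$. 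Inverting gives the desired estimate~\eqref{est:Tprime-cong}, with explicit constants $C(\pcm)=(\gamma_c(\pcm-\kappa)^{(\gamma_c+1)/\gamma_c})^{-1}$ and $C(\pcp)$ of the same form.

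The only genuinely subtle point is the uniformity in $\eps$: I must be careful that the isentropic contribution $\gamma_i\kappa/\tau^{\gamma_i+1}$ does not spoil the lower bound on $|\hP_\eps'(\tau)|$ in the congested regime and does not spoil the upper bound in the free regime. In the congested case this is automatic since $\gamma_i\kappa/\tau^{\gamma_i+1}>0$ only helps for the lower bound; for the upper bound it is dominated by $\eps^{-1/\gamma_c}$ for $\eps\le\bar\eps_0$ chosen small enough, and this choice can be made depending only on $\pcm,\pcp,\kappa,\gamma_{i,c}$, which is admissible under the freedom left in Theorem~\ref{thm:main}. In the free case it is the other way around: the singular term $\gamma_c\eps/(\tau-1)^{\gamma_c+1}$ is $O(\eps)$ thanks to the uniform lower bound $\tau-1\ge(\kappa/\pfp)^{1/\gamma_i}-1>0$, and can therefore be absorbed into the constants $C(\pfm),C(\pfp)$ up to a further shrinking of $\bar\eps_0$.
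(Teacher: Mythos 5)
Your proof is correct and is the standard argument (the paper states this lemma without giving a proof): you invert $\hP_\eps'$ via the inverse function theorem and reduce everything to controlling $\tau=\hT_\eps(p)$, obtaining uniform two-sided bounds on $\tau$ (hence on $\tau-1$ away from zero) in the free regime and the key sandwich $(\eps/\pcp)^{1/\gamma_c}\le\tau-1\le(\eps/(\pcm-\kappa))^{1/\gamma_c}$ in the congested regime, after which the isentropic term is absorbed for $\eps$ small, exactly as needed. The constants also come out with the labelling of the statement, since the upper bound on $|\hT_\eps'|$ corresponds to the lower bound on $|\hP_\eps'|$, which is governed by $\tau-1\le(\eps/(\pcm-\kappa))^{1/\gamma_c}$ (hence $C(\pcm)$), and symmetrically for $C(\pcp)$; the only point worth spelling out is the implicit case split $\tau<2$ versus $\tau\ge 2$ in your free-state upper bound on $\tau$.
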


\medskip

\begin{remark}\label{rmk:inf-wave}
Notice that the speed of waves connecting congested states tends to $\pm \infty$ as $\eps \to 0$ since $|\hT'_\eps(p)| \propto \eps^{\frac{1}{\gamma_c}}$ for $p \in [\pcm,\pcp]$.
This can be interpreted as a degeneracy of the $\mathcal{R}_2$ and $\mathcal{S}_1$ Lax curves into horizontal lines in the $(p,u)-$plane (see Figure~\ref{fig:laxcurves}).
At the level of the limit system~\eqref{eq:psystem-lim}, this leads to define waves with infinite speed. 
A wave of the first (resp. second) family with infinite speed is then located along a line that is parallel to the $x$-axis in the $x-t$ plane, and it is the limit of a sequence of waves of the first (resp. second) family for the $\eps$-soft congestion system~\eqref{eq:psystem-ep}. 
Such wave connects states with the same velocity but different values of the pressure.
The existence of these waves with infinite speed is the reason why the initial datum $(p_\text{in},u_\text{in})$ is in general not attained on $]-\infty,\overline{x}(0)]$, see Definition~\ref{entr-sol-def-2} and Remark~\ref{rmk:ID-hard}.\\
The so-called \textit{declustering waves} exhibited by Degond et al. in~\cite{degond2011} (Appendix A, Proposition 5) can be seen as a subclass of waves with infinite speed.
Let us point out however that such declustering waves associated with a contact discontinuity at the interface in~\cite{degond2011}, involving one state with free pressure (actually zero pressure since $\kappa=0$ in~\cite{degond2011}), cannot be observed in our context as we work in the case where $u(t,\bar x(t) - ) > u(t,\bar x(t) +)$, so that in our setting the interface is the limit of a 2-shock and not a contact discontinuity.
\end{remark}

\medskip
\bigskip
\paragraph{Front Tracking Algorithm.}
To establish the existence of weak entropy solutions to the $\eps$-soft-congested system \eqref{eq:psystem-ep}, we rely a front-tracking algorithm that we describe below.
\begin{definition}[$\rho$-Approximate front tracking solution, \cite{bressan2000}]
For $\rho > 0$ given, $\hU$ is a $\rho$-approximate front tracking solution to~\eqref{eq:psystem-ep} with initial datum
\begin{equation}\label{eq:init_data_ep}
    \hU(0, \cdot) = \hU^{\eps}_\text{in}(\cdot)
\end{equation}
if the following hold:
\begin{enumerate}[label=(\roman*)]
    \item $\hU$ is piecewise constant, with discontinuities occurring along finitely many straight lines in the $x-t$ plane. 
    Jumps can be either shocks (S) or rarefactions (R).
    \item Along each shock $x= x_\alpha(t)$, $\alpha \in S$, the values $\hU^\ell=(p_\ell, u_\ell) \doteq \hU(t,x_\alpha-)$ and $\hU^r=(p_r, u_r) \doteq \hU(t,x_\alpha+)$ are linked by the relation
    \[
    \hU^r = \mathcal{S}_{k_\alpha}(\sigma_\alpha)(\hU^\ell) \quad \text{for  } k_\alpha \in \{1,2\},
    \]
    Moreover the speed $\dot x_\alpha (t)$ of the shock wave satisfies
    \[
    |\dot{x}_\alpha - \lambda_{k_\alpha} (\hU^r, \hU^\ell)| \leq \rho,
    \]
    where $\lambda_{k} (\hU^r, \hU^\ell)$ are the eigenvalues of the averaged matrix $A(\hU^r,\hU^\ell)$:
    \[
    A(\hU^r,\hU^\ell) \doteq \int_0^1 DF(\theta \hU^r + (1-\theta)\hU^\ell) d\theta, 
    \qquad F(\hU) = \begin{pmatrix} - u\\ \hP_\eps(\tau) \end{pmatrix},
    \]
    where $D=(\de_\tau, \de_u)$.
    \item Along each rarefaction $x= x_\alpha(t)$, $\alpha \in R$, one has
    \[
    \hU^r = \mathcal{R}_{k_\alpha}(\sigma_\alpha)(\hU^\ell) \quad \text{for } k_\alpha \{1,2\}, 
    \]
    where $\sigma_\alpha = p_r -p_\ell \in ]0,\rho].$
    Moreover
    \[
    |\dot{x}_\alpha - \lambda_{k_\alpha} (\hU^r)| \leq \rho.
    \]
    \item At the initial time $\|\hU(0,\cdot) - \hU_\text{in}^\eps(\cdot)\|_{L^1(\R)} \leq\rho.$
\end{enumerate}
\end{definition}     
\paragraph{Definition of the algorithm~\cite{baiti2012}.}
At time $t=0$, we approximate the initial datum $\hU^0_\eps$ by a piecewise constant function with a finite number of jumps. 
At each discontinuity point, we solve the corresponding Riemann problem. Every generated rarefaction wave with strength $|\sigma|> \rho$ is partitioned in small (entropy violating) discontinuities with speed equal to the characteristic speed of the state at the right.\\
The fronts are prolonged until two of them interact. At that time, we solve (approximately) the emerging Riemann problem and so on. For interaction times $t >0$, we always partition an outgoing rarefaction wave, except when one of the incoming waves is a rarefaction of the same characteristic family. 
In that latter case, the outgoing rarefaction is substituted by a single jump of the same strength, with speed equal to the characteristic speed of the state at the right.

\subsection{Interaction estimates}\label{sec:interaction}
To study solutions constructed via the algorithm described above, it is necessary to provide estimates of the difference between the strengths of the corresponding incoming and outgoing fronts.
The next lemma states that, away from the interface, the classical interaction estimates hold independently of $\eps$.
\begin{lemma}[Interactions between two waves away from the interface]
Let a 2-wave of strength $\sigma'$ connecting free states (resp. congested states) interact with a 1-wave of strength $\sigma''$ connecting free states (resp. congested states). 
The outcome is a 1-wave of strength $\sigma_1^+$ connecting free states (resp. congested states) and a 2-wave of strength $\sigma_2^+$ connecting free states (resp. congested states).
Moreover, the following inequality holds
\begin{equation}
    |\sigma_1^+ - \sigma''| + |\sigma_2^+ - \sigma'| \leq C |\sigma' \sigma''|,
\end{equation}
for some $C > 0$ independent of $\eps$.
\end{lemma}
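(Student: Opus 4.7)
The plan is to reduce the estimate to a scalar implicit-function computation on the Lax curves in $(p,u)$ coordinates and then to verify uniformity in $\eps$ through the scaling of $\hT_\eps$ provided by Lemma~\ref{lem:pressure}.

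First, I would set up coordinates. Let $\hU^\ell$ be the state on the far left of the incoming 2-wave, $\hU^m = \mathcal{L}_2^\eps(\sigma')(\hU^\ell)$ the middle state, and $\hU^r = \mathcal{L}_1^\eps(\sigma'')(\hU^m)$ the state on the far right. From~\eqref{eq:laxcurves1}--\eqref{eq:laxcurves2}, each Lax curve can be written as a graph $u = u^\ell + f_k^\eps(\sigma; p^\ell)$, $k=1,2$, so its $p$-component is additive along compositions (and, since shock and rarefaction curves have second-order contact at the base point, the rarefaction formula suffices for this second-order argument). Matching $\mathcal{L}_2^\eps(\sigma_2^+) \circ \mathcal{L}_1^\eps(\sigma_1^+)(\hU^\ell) = \hU^r$ then yields $\sigma_1^+ + \sigma_2^+ = \sigma' + \sigma''$ together with a scalar equation on the $u$-component. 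Setting $\delta \doteq \sigma_1^+ - \sigma'' = -(\sigma_2^+ - \sigma')$, the claim reduces to the scalar bound $|\delta| \leq C|\sigma'\sigma''|$. Expanding the $u$-equation to first order in $\delta$ around $\delta=0$ via the standard Glimm-type argument, the coefficient of $\delta$ is of order $\sqrt{-\hT'_\eps(p^\ell)}$, while the right-hand side (the obstruction to commutation of $\mathcal{L}_1^\eps$ and $\mathcal{L}_2^\eps$) is of order $\sigma'\sigma''\cdot\hT''_\eps(p^\ell)/\sqrt{-\hT'_\eps(p^\ell)}$. The constant $C$ is therefore proportional to $|\hT''_\eps(p^\ell)/\hT'_\eps(p^\ell)|$.

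The main obstacle --- and the only place where $\eps$ can enter --- is to bound this ratio uniformly in $\eps$, particularly in the congested region where both numerator and denominator individually degenerate. By Lemma~\ref{lem:pressure}, $|\hT'_\eps| \sim \eps^{1/\gamma_c}$ on $[\pcm,\pcp]$; differentiating the asymptotic relation $\hT_\eps(p) - 1 \sim (\eps/(p-\kappa))^{1/\gamma_c}$ shows further that every derivative $\hT_\eps^{(j)}$ on this interval is of order $\eps^{1/\gamma_c}$ times an $\eps$-independent function. The two $\eps^{1/\gamma_c}$ factors cancel in the ratio $\hT''_\eps/\hT'_\eps$, which is therefore uniformly bounded on $[\pcm,\pcp]$. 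In the free region $[\pfm,\pfp]$ this is immediate since $\hT_\eps \to \hT^i$ in $C^2$. The interaction estimate follows with $C$ independent of $\eps$.
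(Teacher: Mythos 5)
Your overall strategy is the right one, and it is essentially the argument the paper itself leans on without writing it out: the statement is given without proof and the reader is sent to \cite{bressan2000} (Chapter 7, Lemma 7.2) for the classical interaction estimates, so the only genuinely new content is the uniformity in $\eps$. You identify that point correctly: in the $(p,u)$ variables the pressure component of the Lax parametrization is exactly additive, so the whole estimate reduces to a scalar equation in $\delta$ whose coefficient is of order $\sqrt{-\hT'_\eps}$ and whose right-hand side (the commutation defect) carries a factor $|\hT''_\eps|/\sqrt{-\hT'_\eps}$, so that the constant is governed by the ratio $|\hT''_\eps/\hT'_\eps|$. The cancellation of the two factors $\eps^{1/\gamma_c}$ in this ratio on $[\pcm,\pcp]$ is exactly why the hypothesis that \emph{both} incoming waves connect states of the same nature matters, and it is the reason the lemma can claim an $\eps$-independent constant. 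Note only that Lemma~\ref{lem:pressure} as stated bounds $\hT'_\eps$ but not $\hT''_\eps$; you need (and correctly sketch) the companion estimate $|\hT''_\eps|\leq \eps^{1/\gamma_c}C(\pcm,\pcp)$ on the congested interval, which follows from the same implicit differentiation of $\hP_\eps\circ\hT_\eps=\mathrm{id}$.

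One step is not literally valid as written: the claim that, by second-order contact, ``the rarefaction formula suffices.'' Replacing an incoming shock $\mathcal{S}_k^\eps(\sigma')$ by $\mathcal{R}_k^\eps(\sigma')$ perturbs the far-right state by $O(|\sigma'|^3)$, and this error is \emph{not} dominated by $|\sigma'\sigma''|$ (send $\sigma''\to 0$ with $\sigma'$ fixed). The correct version of your own argument is to work with the exact Lax curves throughout: the defect $E(\sigma',\sigma'')\doteq \mathcal{L}_1^\eps(\sigma'')\circ\mathcal{L}_2^\eps(\sigma')(\hU^\ell)-\mathcal{L}_2^\eps(\sigma')\circ\mathcal{L}_1^\eps(\sigma'')(\hU^\ell)$ vanishes identically when either strength is zero, and since the Lax curves are $C^{2,1}$ one gets $|E(\sigma',\sigma'')|=\bigl|\int_0^{\sigma'}\!\!\int_0^{\sigma''}\partial^2_{12}E\bigr|\leq C|\sigma'\sigma''|$ with $C$ controlled by the mixed second derivatives; the second-order contact of $\mathcal{S}_k^\eps$ and $\mathcal{R}_k^\eps$ enters only to show that these second derivatives of the shock branch obey the same $\eps$-scaling $|\hT''_\eps|/\sqrt{-\hT'_\eps}$ as the rarefaction branch. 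With that repair, and after inverting the Jacobian $\begin{pmatrix}1&1\\-\sqrt{-\hT'_\eps}&\sqrt{-\hT'_\eps}\end{pmatrix}$ as you indicate, your conclusion stands.
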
 
Similar estimates can be obtained for two incoming waves of the same family, we refer to \cite{bressan2000} [Chapter 7, Lemma 7.2] for a complete statement and a detailed proof.\\
The next lemma is concerned with interactions of small waves with the interface. 
\begin{lemma}[Interactions with the interface]
\begin{itemize}
    \item {\textit Small wave coming the left congested domain}: Let a 2-wave of strength $\sigma'$ connecting congested states interact with the interface of strength $\bar \sigma''$. It results a 1-wave of strength $\sigma_1^+$ connecting congested states and a 2-shock interface of strength $\bar \sigma_2^+$. 
    Moreover, the following inequalities hold
    \begin{equation}
    |\sigma_1^+| + |\bar\sigma_2^+ - \bar\sigma''| \leq C |\sigma'|.
    \end{equation}
    \item {\textit Small wave coming the right free domain}: Let a 1-wave (or a 2-wave) of strength $\sigma''$ connecting free states interact with the interface of strength $\bar\sigma'$. It results a 1-wave of strength $\sigma_1^+$ connecting congested states and a 2-shock interface of strength $\bar\sigma_2^+$.
    Moreover, the following inequality holds
    \begin{equation}
    |\sigma_1^+| + |\bar\sigma_2^+ - \bar\sigma'| \leq C |\sigma''| .
    \end{equation}
\end{itemize} 
\end{lemma}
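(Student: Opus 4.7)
The plan is to treat both items by applying the implicit function theorem to the map that sends the strengths of the two outgoing fronts to the right state produced by the post-interaction Riemann problem, linearising around the base configuration consisting of the lone large interface. In both cases the Riemann problem at the interaction time has a congested left state and a free right state, so its admissible solution must consist of a 1-wave entirely in the congested phase followed by a 2-shock interface; this is the only decomposition compatible with the geometry of the Lax curves \eqref{eq:laxcurves1}-\eqref{eq:laxcurves2} and with the ordering $\pfp<\kappa<\pcm$ of Definition~\ref{df:nature}. The two unknowns are therefore the strength $\sigma_1^+$ of the outgoing 1-wave and the new strength $\bar\sigma_2^+$ of the interface.

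Let $\hU^\ell$, $\hU^r$ be the left and right states of the pre-interaction interface, of signed strength $\bar\sigma\in\{\bar\sigma',\bar\sigma''\}$, and define
\[
\Phi_\eps(\sigma_1^+,\bar\sigma_2^+;\sigma)\doteq \mathcal{S}_2^\eps(\bar\sigma_2^+)\circ\mathcal{L}_1^\eps(\sigma_1^+)\circ\widetilde{\mathcal{L}}^\eps(\sigma)(\hU^\ell),
\]
where $\widetilde{\mathcal{L}}^\eps\in\{\mathcal{S}_1^\eps,\mathcal{S}_2^\eps,\mathcal{R}_1^\eps,\mathcal{R}_2^\eps\}$ is the incoming Lax curve and $\mathcal{L}_1^\eps$ stands for the 1-shock or 1-rarefaction curve according to the sign of $\sigma_1^+$. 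By construction $\Phi_\eps(0,\bar\sigma;0)=\hU^r$, so the implicit function theorem yields $(\sigma_1^+,\bar\sigma_2^+)$ as a $C^1$ function of the small incoming strength $\sigma\in\{\sigma',\sigma''\}$, provided the partial Jacobian $D_{(\sigma_1^+,\bar\sigma_2^+)}\Phi_\eps$ at the base point is invertible uniformly in $\eps$. A direct computation from \eqref{eq:laxcurves1}-\eqref{eq:laxcurves2} shows that the tangent to the 1-Lax curve at a congested state $\hU^\ell$ is $(1,\pm\sqrt{-\hT_\eps'(p^\ell)})$, whose second component is $O(\eps^{1/(2\gamma_c)})$ by \eqref{est:Tprime-cong}, while the tangent to the 2-Lax interface curve at strength $\bar\sigma$ has both components of order one, because the secant slope $\sqrt{-(\hT_\eps(p^r)-\hT_\eps(p^\ell))/(p^r-p^\ell)}$ is bounded above and below, the pressure jump across the interface being macroscopic and the values $\hT_\eps(p^r)$, $\hT_\eps(p^\ell)$ remaining separated by an $\eps$-independent gap (one lies near $1$, the other in the free-state range governed by \eqref{eq:Tfree}). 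The two columns of the Jacobian are thus uniformly transverse and the determinant is of order one.

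Inverting the Jacobian and evaluating on the tangent of the incoming wave then gives $\partial_\sigma\sigma_1^+=O(1)$ and $\partial_\sigma\bar\sigma_2^+=O(1)$, with constants independent of $\eps$; integrating from $\sigma=0$ yields the announced bounds $|\sigma_1^+|+|\bar\sigma_2^+-\bar\sigma|\le C|\sigma|$. In the first item the incoming 2-wave lives in the congested region, so its tangent at $\hU^\ell$ has second component $O(\eps^{1/(2\gamma_c)})$, which actually sharpens the second sensitivity; in the second item the incoming free wave has $O(1)$ tangent components, and the same inversion applies directly. The main technical hurdle, and the precise motivation for inserting the weight $\eps^{-1/(2\gamma)}$ on the congested velocity variation in \eqref{eq:set-solns-eps}, is the preservation of this uniform transversality as $\eps\to 0$: the 1-Lax curve degenerates to a horizontal line in the $(p,u)$ plane, and only the $O(1)$ $u$-component of the interface tangent, inherited from the bounded-below pressure jump at the interface, keeps the Jacobian determinant away from zero independently of $\eps$.
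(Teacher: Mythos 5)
The paper itself states this lemma without proof (the details are deferred to the forthcoming paper \cite{ancona2022}), so your argument cannot be checked line-by-line against a printed one; judged on its own terms, it is essentially correct and is the natural route: reduce the interaction estimate to the implicit function theorem for the map assigning the outgoing strengths $(\sigma_1^+,\bar\sigma_2^+)$, and verify that the relevant Jacobian is invertible \emph{uniformly in $\eps$}. You have correctly identified the one genuinely delicate point, namely that the congested $1$-Lax curve degenerates to a horizontal line (its $u$-tangent is $O(\eps^{1/(2\gamma_c)})$ by \eqref{est:Tprime-cong}) while the $2$-shock interface curve keeps a slope bounded away from zero uniformly in $\eps$, because the jump $\hT_\eps(p^r)-\hT_\eps(p^\ell)$ across the interface stays of order one (one state near $\tau=1$, the other governed by \eqref{eq:Tfree}); this is exactly what keeps the determinant of order one and yields $\eps$-independent constants $C$.

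Two points you should tighten. First, the bookkeeping of the map $\Phi_\eps$: for the wave incoming from the left the incoming curve should appear on the \emph{right-hand side} of the equation (the outgoing decomposition $\mathcal{S}_2^\eps(\bar\sigma_2^+)\circ\mathcal{L}_1^\eps(\sigma_1^+)(\hU^\ell)$ must match $\mathcal{S}_2^\eps(\bar\sigma'')\circ\mathcal{L}_2^\eps(\sigma')(\hU^\ell)$, not a composition of all three curves applied to $\hU^\ell$ and equated to a fixed $\hU^r$), and symmetrically for the wave incoming from the free side; the linearization you compute is nonetheless the correct one. Second, to get a constant $C$ and a radius of validity independent of $\eps$ from the implicit function theorem you also need uniform bounds on the \emph{second} derivatives of the Lax curves in the congested range; these do hold (one checks $\partial_p\sqrt{-\hT_\eps'(p)}=O(\eps^{1/(2\gamma_c)})$ for $p\in[\pcm,\pcp]$, and the interface shock curve has $O(1)$ second derivatives), but the step should be stated. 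Also, the slope of the interface $2$-shock curve is $-f'(p)/(2\sqrt{f(p)})$ with $f(p)=-(\hT_\eps(p)-\hT_\eps(p^\ell))(p-p^\ell)$, not the secant slope you quote; the conclusion (bounded above and below uniformly in $\eps$) is unaffected. Finally, the closing remark attributing the weight $\eps^{-1/(2\gamma_c)}$ in \eqref{eq:set-solns-eps} to the transversality issue is off the mark: that weight encodes the relation $|\Delta u|\sim\eps^{1/(2\gamma_c)}|\Delta p|$ across congested waves (Lemma \ref{lem:WTV}), and is not what makes the Jacobian here invertible.
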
 
\medskip
\begin{remark}
In the case of an interaction between the interface and a wave connecting free states, observe that no wave is reflected in the free domain. The outgoing small wave is indeed a wave connecting congested states. 
\end{remark}

\subsection{Glimm functional}\label{sec:Glimm}
In view of our hypotheses, the strength of the initial 2-shock interface is only sligthly perturbed by the interactions with the small waves coming from the left (i.e. from the congested domain) and the right (i.e. from the free domain).  
We define $\bar x(t) = \bar x^{\eps,\rho}(t)$ the position of the interface at time $t$ and $\bar \sigma(t) = \bar \sigma^{\eps,\rho}(t)$ its strength. 
Namely, $\bar \sigma(t)$ is the difference at time $t$ between the pressure on the right/free side of the interface and the pressure on the left/congested side of the interface.
We recall the notation
\begin{equation*}
I_{1,t} \doteq ]-\infty, \bar x(t)], \quad I_{2,t} \doteq [\bar x(t), +\infty[ \ .
\end{equation*}
\begin{definition}[Glimm functional]
For $t > 0$, we define the Glimm functional $\Upsilon(t)$ as follows
\begin{equation}
    \Upsilon(t) \doteq \bar \sigma(t) + \sum_{x_\alpha \in I_{2,t}} |\sigma_\alpha| + k_c \sum_{x_\alpha \in I_{1,t}} |\sigma_\alpha|
    + k_{i,f} Q_{i,f}(t) + k_{f,f} Q_{f,f}(t) + k_{c,c} Q_{c,c}(t),
\end{equation}
where the functionals $Q_{i,f}, Q_{f,f}, Q_{c,c}$ are quadradic in terms of the strengths of the waves:
\begin{align*}
    Q_{i,f}(t)  = |\bar \sigma(t)| \sum_{x_\alpha \in I_{2,t}} |\sigma_\alpha|, \quad 
    Q_{f,f}(t)  = \sum_{x_\alpha,x_\beta \in I_{2,t}} |\sigma_\alpha \sigma_\beta|, \quad 
    Q_{c,c}(t)  = \sum_{x_\alpha,x_\beta \in I_{1,t}} |\sigma_\alpha \sigma_\beta|,
\end{align*}
and $k_c ,k_{i,f}, k_{f,f},k_{c,c}$ are suitable positive constants (independent of $\eps$).
\end{definition}
\begin{proposition}[Decrease of the Glimm functional]\label{eq:glimm-decrease}
Under the hypotheses of Theorem~\ref{thm:main}, we can find $k_c, k_{f,f}, k_{c,c}$ and $k_{i,f}> k_c$ sufficiently large such that
\begin{equation}\label{eq:dec-Glimm}
    \Upsilon(t) \leq \Upsilon (0) \quad \forall t \geq 0.
\end{equation}
\end{proposition}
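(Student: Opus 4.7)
The plan is to verify that $\Upsilon$ is non-increasing by showing $\Delta\Upsilon(t)\doteq\Upsilon(t+)-\Upsilon(t-)\leq 0$ at every interaction time; between interactions $\Upsilon$ is trivially constant. The check proceeds by a case analysis on the type of interaction, combined with a careful ordering in the choice of the four weights $k_c, k_{i,f}, k_{f,f}, k_{c,c}$.

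For an interaction of two small waves taking place strictly inside the free domain $I_{2,t}$, I would follow the classical Glimm argument: the interaction estimate from Section~\ref{sec:interaction} bounds the variation of the linear part $\sum_{I_{2,t}}|\sigma_\alpha|$ by $C|\sigma'\sigma''|$, while the quadratic functional $Q_{f,f}$ strictly decreases by at least $|\sigma'\sigma''|$ up to a correction of order $\delta|\sigma'\sigma''|$ coming from the re-pairing of the outgoing waves with the other fronts. Choosing $k_{f,f}$ large enough makes the total contribution non-positive. Interactions of two small waves entirely inside the congested domain $I_{1,t}$ are handled identically via $Q_{c,c}$, with the same $\eps$-independent constant $C$; interactions of two waves of the same family receive the analogous treatment using~\cite[Chapter 7, Lemma 7.2]{bressan2000}.

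The genuinely new cases are the interactions with the large interface. A preliminary observation is that $|\bar\sigma(t)|$ stays bounded below by a constant $c_0>0$ independent of $\eps$ and $t$ provided $\delta$ is chosen small enough: $|\bar\sigma(0)|$ is close to $|p_{0,1}-p_{0,2}|$, and the interaction estimates imply that $|\bar\sigma|$ cannot vary by more than $C\delta$ throughout the whole evolution. Hence $Q_{i,f}=|\bar\sigma|\sum_{I_{2,t}}|\sigma_\alpha|$ provides a genuine decrease of order $c_0|\sigma''|$ whenever a free wave of strength $|\sigma''|$ is absorbed by the interface. When the incoming small wave of strength $|\sigma''|$ comes from the free side, a $1$-wave of strength $|\sigma_1^+|\leq C|\sigma''|$ appears in the congested domain and the interface strength changes by at most $C|\sigma''|$; the linear part varies by at most $(k_cC+C-1)|\sigma''|$, while $\Delta Q_{i,f}\leq (C\delta-c_0)|\sigma''|$, $\Delta Q_{c,c}\leq C\delta|\sigma''|$, $\Delta Q_{f,f}\leq 0$. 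For $\delta$ small and $k_{i,f}$ large, the negative contribution $k_{i,f}(C\delta-c_0)|\sigma''|$ dominates. When the wave of strength $|\sigma'|$ arrives from the congested side, the reflected $1$-wave $\sigma_1^+$ stays in the congested domain, so the congested linear sum varies by $k_c(|\sigma_1^+|-|\sigma'|)$ and the interface by at most $C|\sigma'|$; the only potentially positive contribution, $\Delta Q_{i,f}\leq C|\sigma'|\delta$, is absorbed either by smallness of $\delta$ or by the decrease of $Q_{c,c}$ associated with the removal of $\sigma'$ for $k_{c,c}$ large enough.

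The principal obstacle is the calibration and ordering of the four weights, which must be performed in the correct sequence: first $k_{f,f}$ and $k_{c,c}$ are fixed large enough to absorb all internal interactions independently; then $k_c$ is determined to ensure the congested/free linear compatibility; finally $k_{i,f}$ is taken sufficiently large (so that in particular $k_{i,f}>k_c$) in order that the quadratic decrease of $Q_{i,f}$, powered by the uniform lower bound $|\bar\sigma|\geq c_0$, dominates the ``bad'' contribution stemming from the conversion of a free wave into a congested wave across the interface. A second, structural difficulty is that every constant $C$ appearing in the above bounds must be uniform in $\eps$; this is exactly what is guaranteed by the $\eps$-independent interaction estimates of Section~\ref{sec:interaction}, so that the threshold $\delta$ and the four weights can be fixed independently of $\eps$ and~\eqref{eq:dec-Glimm} holds for all $t\geq 0$.
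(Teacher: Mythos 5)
The paper itself does not prove this proposition (the proof is deferred to the forthcoming detailed paper), so your sketch can only be measured against the argument that the structure of $\Upsilon$ and the interaction estimates of Section~2.1 clearly intend. On that score your overall architecture is right: constancy between interactions, the classical quadratic-potential argument via $Q_{f,f}$ and $Q_{c,c}$ for interactions of small waves among themselves, the uniform lower bound $|\bar\sigma|\geq c_0>0$ on the interface strength, and the use of $Q_{i,f}$ to produce a decrease of order $c_0|\sigma''|$ when a free wave is absorbed by the interface. Your accounting for the free-side case is correct and complete (note in particular that no wave is reflected back into the free domain, so $\Delta Q_{f,f}\leq 0$ there), and your ordering of the weights, with $k_{i,f}$ chosen last and larger than $k_c$ precisely because the absorbed free wave reappears as a congested wave carrying the weight $k_c$, is the right calibration.

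The genuine gap is the congested-side interaction. There is no quadratic potential coupling congested waves to the interface, so for that case the entire definite decrease of order $|\sigma'|$ must come from the weighted linear term $k_c\sum_{x_\alpha\in I_{1,t}}|\sigma_\alpha|$. The incoming $2$-wave contributes $-k_c|\sigma'|$, but the reflected $1$-wave contributes $+k_c|\sigma_1^+|$ and the interface strength can grow by a full $C|\sigma'|$; the balance
\begin{equation*}
k_c\bigl(|\sigma_1^+|-|\sigma'|\bigr)+C|\sigma'|\leq 0
\end{equation*}
closes only if the reflection coefficient satisfies $|\sigma_1^+|\leq\theta|\sigma'|$ with $\theta<1$ \emph{uniformly in $\eps$}, so that the left-hand side is at most $\bigl(C-k_c(1-\theta)\bigr)|\sigma'|$ and $k_c$ can be taken large. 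The interaction lemma as stated only gives $|\sigma_1^+|\leq C|\sigma'|$ for an unspecified constant, and your two proposed fallbacks do not repair this: $\Delta Q_{c,c}=(|\sigma_1^+|-|\sigma'|)\sum_{\beta}|\sigma_\beta|$ is only of size $\delta|\sigma'|$ (and vanishes if no other congested wave is present), and ``smallness of $\delta$'' cannot offset a positive term of order $|\sigma'|$ that is independent of $\delta$. You therefore need, and should state as a separate quantitative lemma, a damping estimate on the reflection coefficient at the large $2$-shock interface for waves impinging from the congested side, proved uniformly in $\eps$ from the structure of the Lax curves \eqref{eq:laxcurves1}--\eqref{eq:laxcurves2} and Lemma~\ref{lem:pressure}; this is exactly the role the weight $k_c$ is designed to exploit. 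A secondary bookkeeping point: once $k_{i,f}$ is fixed, the term $k_{i,f}\,\Delta|\bar\sigma|\sum_{I_{2,t}}|\sigma_\alpha|\leq k_{i,f}C\delta|\sigma'|$ generated by a congested-side interaction must also be absorbed by $k_c(1-\theta)|\sigma'|$, so $\delta$ must be chosen after all four weights; your sketch is compatible with this but should say it explicitly.
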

\begin{lemma}[Weighted total variation]\label{lem:WTV}
Let us assume that we have constructed the piecewise constant solution $(t,x) \mapsto \hU^{\eps,\rho}(t,x)=(p^{\eps,\rho}, u^{\eps,\rho})(t,x)$ up to time $t>0$, by means of the wave front tracking algorithm.
Let us define its weighted total variation:
\begin{equation}\label{eq:weightTV}
    WTV_\eps(t) \doteq \sum_{i=1}^2 TV(p^{\eps,\rho}(t, \cdot), I_{i,t}^\eps) 
    +  \frac{1}{\eps^{\frac{1}{2\gamma_c}}}\,\text{TV}\big(u^{\eps,\rho},\, I^\eps_{1,t}\big)
    + \text{TV}\big(u^{\eps,\rho},\, I^\eps_{2,t}  \big).
\end{equation}
Then, there exist two constant $0<C_1 < C_2$, independent of $\eps$ such that
\begin{equation}\label{eq:control-WTV}
C_1 WTV_\eps(t) \leq \Upsilon(t) \leq C_2 WTV_\eps(t).    
\end{equation}
\end{lemma}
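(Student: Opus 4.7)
The plan is to reduce both sides of~\eqref{eq:control-WTV} to suitable weighted sums of the wave strengths $|\sigma_\alpha|$ and the interface strength $\bar\sigma(t)$, and then compare term by term. Since the front-tracking solution $\hU^{\eps,\rho}(t,\cdot)$ is piecewise constant with jumps indexed by the fronts $x_\alpha(t)$, the total variation of pressure and velocity is a finite sum of one-sided jumps. By definition $\sigma_\alpha = p_r - p_\ell$ across the $\alpha$-th front, so
\[
\text{TV}\big(p^{\eps,\rho},\, I^\eps_{2,t}\big) = \sum_{x_\alpha \in I_{2,t}} |\sigma_\alpha|,
\qquad
\text{TV}\big(p^{\eps,\rho},\, I^\eps_{1,t}\big) = \sum_{x_\alpha \in I_{1,t}} |\sigma_\alpha|,
\]
and the jump across the interface, by definition of $\bar\sigma(t)$, is $|\bar\sigma(t)|$. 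Thus the sum of the two $p$-total-variation terms in $WTV_\eps(t)$ equals $\sum_{x_\alpha \in I_{1,t}\cup I_{2,t}} |\sigma_\alpha| + |\bar\sigma(t)|$.

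Next, I would control the velocity total variation on each side of the interface using the Lax curves~\eqref{eq:laxcurves1}--\eqref{eq:laxcurves2} and the pressure estimates of Lemma~\ref{lem:pressure}. On the free side, for a shock $\sigma_\alpha$ connecting free states,
\[
|u_r - u_\ell| = \sqrt{-(\hT_\eps(p_r)-\hT_\eps(p_\ell))(p_r-p_\ell)} \leq \sqrt{C(\pfm)}\,|\sigma_\alpha|,
\]
and the symmetric lower bound $\sqrt{C(\pfp)}\,|\sigma_\alpha|$ holds; an analogous estimate is obtained for rarefactions by integrating $\sqrt{-\hT_\eps'(\xi)}$ over the $p$-interval. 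Summing over fronts in $I_{2,t}$ gives
\[
c_f \sum_{x_\alpha \in I_{2,t}} |\sigma_\alpha|
\,\leq\, \text{TV}\big(u^{\eps,\rho},\, I^\eps_{2,t}\big)
\,\leq\, C_f \sum_{x_\alpha \in I_{2,t}} |\sigma_\alpha|,
\]
with constants independent of $\eps$. On the congested side, the same shock/rarefaction formulas combined with the bound~\eqref{est:Tprime-cong} yield $|u_r - u_\ell| \asymp \eps^{1/(2\gamma_c)} |\sigma_\alpha|$, so multiplying by $\eps^{-1/(2\gamma_c)}$ produces the desired $\eps$-uniform equivalence
\[
c_c \sum_{x_\alpha \in I_{1,t}} |\sigma_\alpha|
\,\leq\, \frac{1}{\eps^{1/(2\gamma_c)}}\text{TV}\big(u^{\eps,\rho},\, I^\eps_{1,t}\big)
\,\leq\, C_c \sum_{x_\alpha \in I_{1,t}} |\sigma_\alpha|.
\]
Precisely this is the reason why the weight $\eps^{-1/(2\gamma_c)}$ is inserted in $WTV_\eps$: it compensates the vanishing $\sqrt{|\hT_\eps'|}$ factor in the congested region and makes $WTV_\eps$ uniformly comparable to the sum of wave strengths.

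Adding the $p$- and $u$-contributions then gives a two-sided bound of $WTV_\eps(t)$ by $A(t) \doteq |\bar\sigma(t)| + \sum_{x_\alpha \in I_{2,t}}|\sigma_\alpha| + \sum_{x_\alpha \in I_{1,t}}|\sigma_\alpha|$, with constants independent of $\eps$ and $\rho$. The lower bound $\Upsilon(t) \geq C_1\, WTV_\eps(t)$ is then immediate, since $\Upsilon(t) \geq \min(1,k_c)\, A(t)$ and the quadratic part of $\Upsilon$ is nonnegative. For the upper bound $\Upsilon(t) \leq C_2\, WTV_\eps(t)$, I would use the smallness of the total variation enforced by the domain $\mathcal{D}^{\delta_0,\eps}_t$ (via Proposition~\ref{eq:glimm-decrease} and a bootstrap on $\Upsilon(0)$): the quadratic terms satisfy
\[
Q_{i,f}(t) \leq |\bar\sigma(t)|\, A(t),
\qquad
Q_{f,f}(t) + Q_{c,c}(t) \leq A(t)^2 \leq C\delta_0\, A(t),
\]
so $\Upsilon(t) \leq (\max(1,k_c) + C'\delta_0) A(t) \leq C_2\, WTV_\eps(t)$ for $\delta_0$ small enough.

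The only nontrivial point, which I would treat carefully, is the uniformity of the constants across the free/congested regions and the correct handling of the interface term $\bar\sigma(t)$, which formally plays the role of a single big wave of the second family and contributes to the $p$-total variation on both sides of $\bar x(t)$; this is precisely accounted for by our convention on $\text{TV}$ on closed intervals and by the way $\bar\sigma(t)$ appears with coefficient one in $\Upsilon$. No delicate analysis beyond the Lax-curve computation and Lemma~\ref{lem:pressure} is required.
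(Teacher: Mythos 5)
Your proof is correct and follows essentially the same route as the paper's: it identifies the pressure total variation with the sum of wave strengths, converts velocity jumps into pressure jumps via the Rankine--Hugoniot/Lax-curve relation together with the bounds on $\hT_\eps'$ from Lemma~\ref{lem:pressure} (so that the weight $\eps^{-1/(2\gamma_c)}$ exactly compensates the factor $\sqrt{-\hT_\eps'}\propto\eps^{1/(2\gamma_c)}$ in the congested region), and absorbs the quadratic Glimm terms into the linear ones using the smallness of the perturbation. The only cosmetic imprecision is the step $A(t)^2\leq C\delta_0 A(t)$, which as written presupposes $A(t)=O(\delta_0)$ even though $A$ contains the $O(1)$ interface strength $\bar\sigma(t)$; since $Q_{f,f}$ and $Q_{c,c}$ only involve the small waves, the intended bound holds anyway and the argument is unaffected.
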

\begin{proof}[Sketch of the proof]
The equivalence between $TV(p^{\eps,\rho}(t, \cdot), \R)$ and $\Upsilon(t)$ is direct since all the quadratic terms in $\Upsilon$ can be absorbed in the linear ones.
For the total variation of the velocity, we write the Rankine-Hugoniot condition satisfied by any discontinuity:
\[
u_r - u_\ell 
= \pm \sqrt{-(v_r - v_\ell)(p_r - p_\ell)} 
= \pm \sqrt{-(\hT_\eps(p_r) - \hT_\eps(p_\ell))(p_r - p_\ell)}
= \pm |p_r - p_\ell| \sqrt{-\hT_\eps'(\hat p)},
\]
for some $\hat p$ between $p_r$ and $p_\ell$.
and observe that
\begin{equation*}
    \hT_\eps'(\hat p) = 
    \begin{cases}
    O(1) & \quad \text{if} \quad p_r,p_\ell \in [\pfm,\pfp],\\
    O(\eps^{\frac{1}{2\gamma_c}}) & \quad \text{if} \quad p_r,p_\ell \in [\pcm,\pcp].
    \end{cases}
\end{equation*}
\end{proof}
\bigskip
\begin{corollary}\label{cor:TV_u_cong}
Combining \eqref{eq:control-WTV} with \eqref{eq:dec-Glimm} yields the following inequality:
\begin{align*}
    \frac{1}{\eps^{\frac{1}{2\gamma_c}}}\,\text{TV}\big(u^{\eps,\rho},\, I^\eps_{1,t}\big) \le C_1^{-1} \Upsilon (0) \quad \text{i.e.} \quad \text{TV}\big(u^{\eps,\rho},\, I^\eps_{1,t}\big) \le C \eps^{\frac{1}{2\gamma_c}}.
\end{align*}
\end{corollary}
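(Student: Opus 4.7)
The plan is to chain three inequalities in sequence, with no real computation needed. First, since every summand in the weighted total variation \eqref{eq:weightTV} is nonnegative, I would drop all terms except the one we want to control, yielding
\[
\frac{1}{\eps^{1/(2\gamma_c)}}\,\text{TV}\big(u^{\eps,\rho},\, I^\eps_{1,t}\big) \;\le\; WTV_\eps(t).
\]
Second, I would apply the left inequality of \eqref{eq:control-WTV} from Lemma~\ref{lem:WTV}, $WTV_\eps(t) \le C_1^{-1}\Upsilon(t)$. Third, I would invoke the Glimm decrease \eqref{eq:dec-Glimm} from Proposition~\ref{eq:glimm-decrease}, $\Upsilon(t)\le \Upsilon(0)$. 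Composing the three bounds and multiplying by $\eps^{1/(2\gamma_c)}$ gives
\[
\text{TV}\big(u^{\eps,\rho},\, I^\eps_{1,t}\big)
\;\le\; \frac{\Upsilon(0)}{C_1}\,\eps^{1/(2\gamma_c)},
\]
which is exactly the claim with $C = \Upsilon(0)/C_1$.

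The only nontrivial point is to check that $C$ is independent of $\eps$ (and of the front-tracking parameter $\rho$). Since $C_1$ comes from Lemma~\ref{lem:WTV} and is already declared $\eps$-independent, it suffices to control $\Upsilon(0)$ uniformly in $\eps$. For this I would use the upper bound in \eqref{eq:control-WTV} together with the contribution of the initial interface: the linear terms of $\Upsilon(0)$ are dominated by $WTV_\eps(0) + |\bar\sigma(0)|$, and the quadratic terms $Q_{i,f}(0), Q_{f,f}(0), Q_{c,c}(0)$ are dominated by $(WTV_\eps(0) + |\bar\sigma(0)|)^2$. The assumption $\hU^\eps_\text{in} \in \mathcal{D}^{\delta_1,\eps}_0$ gives $WTV_\eps(0)<\delta_1$, and the initial interface strength $\bar\sigma(0) = p_{0,2}^\eps - p_{0,1}^\eps$ is bounded thanks to $p_{0,1}^\eps = p_{0,1}$ and $p_{0,2}^\eps \to p_{0,2}$ as $\eps \to 0$. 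Hence $\Upsilon(0) = O(1)$, uniformly in $\eps$.

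There is no real obstacle to this proof: it is a concatenation of the two preceding results. The conceptual content, however, is important — the estimate is the quantitative counterpart of the incompressibility constraint $\partial_x u = 0$ in the congested region that characterizes the limit system \eqref{eq:psystem-lim}. It provides a rate, $\eps^{1/(2\gamma_c)}$, at which the velocity flattens on $I^\eps_{1,t}$, and this rate is precisely what will allow the velocity to converge to a constant in $x$ on the congested side when passing to the limit $\eps\to 0$ in Section~\ref{sec:lim-ep}.
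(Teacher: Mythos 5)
Your proof is correct and follows exactly the route the paper intends: the corollary is stated as an immediate consequence of dropping the nonnegative terms in $WTV_\eps(t)$, then chaining $C_1\,WTV_\eps(t)\le \Upsilon(t)\le\Upsilon(0)$ from \eqref{eq:control-WTV} and \eqref{eq:dec-Glimm}. Your extra check that $\Upsilon(0)$ is bounded uniformly in $\eps$ (small initial perturbation plus the $O(1)$ interface strength $\bar\sigma(0)$) is a worthwhile addition that the paper leaves implicit.
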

The above inequality will imply that in the limit $\eps \to 0$ the velocity is constant in the congested domain.
\begin{remark}
We need to ensure that the number of interactions with two outgoing waves of the same family is finite. In every strip $[t^*- \delta t, t^*] \times [a,b]$, the interactions in which there is more than one outgoing wave of the same family can occur only in the case of two interacting shocks of the same family  (resp. strength $\sigma_\alpha$, $\sigma_\beta$), as we may have to split an outgoing rarefaction of size $\sigma^+$. One can show however that in this case, the difference of the values of the Glimm functional after and before interaction
$\Delta \Upsilon (t) < 0$ for any $\eps>0$. Since from \eqref{eq:dec-Glimm} $\Upsilon (t) \le \Upsilon (0)$, this can only happen a finite number of times.
\end{remark}

\section{Existence of a BV approximate solution at $\eps$ fixed and convergence as $\eps \to 0$}\label{sec:limit}

\subsection{Some properties of the wave front tracking approximation}\label{sec:propWFT}
We provide below Lipschitz estimates in time.
\begin{lemma}[Lipschitz continuity in time]\label{lem:Lip_t}
Let $(t,x) \mapsto \hU^{\eps,\rho}(t,x)$ be the wave front tracking approximation constructed in the previous section.
Let $0 < s< t$, then there exists $L > 0 $ independent of $\eps$, such that
\begin{align}
\|p^{\eps,\rho}(t,\cdot) - p^{\eps,\rho}(s,\cdot)\|_{L^1(I_{1,t})}
& \leq \eps^{-\frac{1}{2\gamma_c}} L (t-s), 
\quad \|p^{\eps,\rho}(t,\cdot) - p^{\eps,\rho}(s,\cdot)\|_{L^1(I_{2,t})} \leq L (t-s),\label{eq:Lip-t-p}\\
\|u^{\eps,\rho}(t,\cdot) - u^{\eps,\rho}(s,\cdot)\|_{L^1(\R)}
& \leq  L (t-s),  \label{eq:Lip-t-u} \\
\|\tau^{\eps,\rho}(t,\cdot) - \tau^{\eps,\rho}(s,\cdot)\|_{L^1(I_{1,t})}
& \leq \eps^{\frac{1}{2\gamma_c}} L (t-s),
\quad \|\tau^{\eps,\rho}(t,\cdot) - \tau^{\eps,\rho}(s,\cdot)\|_{L^1(I_{2,s})} \leq  L (t-s).\label{eq:Lip-t-v}
\end{align}
\end{lemma}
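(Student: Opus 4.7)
The strategy is to exploit the piecewise-constant structure of $\hU^{\eps,\rho}$: for any component $f\in\{p,u,\tau\}$ the $L^1$ in-time difference decomposes into strip contributions, one per front, of width $|\dot x_\alpha|(t-s)$ and height $|f^r_\alpha - f^\ell_\alpha|$. First, since the Glimm functional of Proposition~\ref{eq:glimm-decrease} is nonincreasing and each interaction either produces only finitely many outgoing fronts or strictly decreases $\Upsilon$ by a definite amount, there are only finitely many interaction times in any bounded interval; by additivity in $t$, I may reduce to the case where $[s,t]$ contains no interaction. On such a subinterval every front $x_\alpha(\cdot)$ is a straight segment with constant speed $\dot x_\alpha$ and constant left/right states, and the basic strip estimate
\begin{equation*}
\|f(t,\cdot)-f(s,\cdot)\|_{L^1(A)}\;\le\;(t-s)\sum_{\alpha}|f^r_\alpha - f^\ell_\alpha|\,|\dot x_\alpha|
\end{equation*}
holds for every Borel set $A\subset\R$, the sum running over fronts whose trajectory meets $A$ during $[s,t]$.

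I would then split the sum according to the region occupied by each front. For small fronts in the free region $I_{2,t}$, \eqref{eq:eigenvalues} combined with Lemma~\ref{lem:pressure}--\eqref{eq:Tfree} gives $|\dot x_\alpha|=O(1)$ uniformly in $\eps$, while the Rankine--Hugoniot identity recalled in the sketch of Lemma~\ref{lem:WTV} yields $|u^r_\alpha-u^\ell_\alpha|\le C|p^r_\alpha-p^\ell_\alpha|$ and $|\tau^r_\alpha-\tau^\ell_\alpha|\le C|p^r_\alpha-p^\ell_\alpha|$; combined with $\sum|p^r_\alpha-p^\ell_\alpha|\le C\,\mathrm{TV}(p,I_{2,t})\le C\delta$ from the Glimm bound, every free-region contribution is $O(t-s)$. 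For small fronts in the congested region $I_{1,t}$, \eqref{est:Tprime-cong} together with \eqref{eq:eigenvalues} gives $|\dot x_\alpha|\le C\eps^{-1/(2\gamma_c)}$; multiplying by $\mathrm{TV}(p,I_{1,t})=O(1)$ produces the singular $\eps^{-1/(2\gamma_c)}$ factor in \eqref{eq:Lip-t-p}; using Corollary~\ref{cor:TV_u_cong}, $\mathrm{TV}(u,I_{1,t})=O(\eps^{1/(2\gamma_c)})$, so the $\eps$-powers cancel and the congested contribution to \eqref{eq:Lip-t-u} is $O(t-s)$; using $|\tau^r_\alpha-\tau^\ell_\alpha|\le \|\hT'_\eps\|_{L^\infty_{\mathrm{cong}}}|p^r_\alpha-p^\ell_\alpha|\le C\eps^{1/\gamma_c}|p^r_\alpha-p^\ell_\alpha|$, the product with the singular speed yields the $\eps^{1/(2\gamma_c)}$ factor in \eqref{eq:Lip-t-v}.

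Finally, the large 2-shock interface at $\bar x^\eps(t)$ has $O(1)$ jumps in each of $p,u,\tau$, and by \eqref{eq:interf-speed-bound} its speed is bounded by $\bar\lambda_2+\delta_0=O(1)$; it therefore contributes $O(t-s)$ to each $L^1$ norm. Since $\dot{\bar x}^\eps>0$, its trajectory on $[s,t]$ does not enter $I_{2,t}$, which is why no interface term is needed for the $p$-bound on $I_{2,t}$; on $\R$ and on $I_{2,s}$ the interface contribution $O(t-s)$ is absorbed into the stated bounds; on $I_{1,t}$ the small-wave analysis applies on the common congested portion $I_{1,s}\subset I_{1,t}$ and delivers the $\eps^{1/(2\gamma_c)}$ factor in \eqref{eq:Lip-t-v}, with the thin sliver $[\bar x^\eps(s),\bar x^\eps(t)]$ accounting for the asymmetric indices $I_{1,t}$ versus $I_{2,s}$ in the statement. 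The main technical obstacle is the balancing of the singular wave speed $O(\eps^{-1/(2\gamma_c)})$ in the congested region against the smallness of $\mathrm{TV}(u)$ and $\mathrm{TV}(\tau)$ there: this cancellation is precisely what the weighted Glimm functional of Proposition~\ref{eq:glimm-decrease} and the weighted total variation \eqref{eq:weightTV} of Lemma~\ref{lem:WTV} were designed to produce, and without the $1/\eps^{1/(2\gamma_c)}$-weight on the congested velocity one would lose the uniform-in-$\eps$ estimate \eqref{eq:Lip-t-u}.
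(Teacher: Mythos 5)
Your proposal is correct and follows essentially the same approach as the paper's proof: both decompose the $L^1$ difference into per-front strips of width $|\dot x_\alpha|\,(t-s)$ and height equal to the jump, bound the speeds uniformly in the free region, and in the congested region balance the singular speed $O(\eps^{-1/(2\gamma_c)})$ against $\mathrm{TV}(u)=O(\eps^{1/(2\gamma_c)})$ from Corollary~\ref{cor:TV_u_cong} and against $|\Delta\tau|\le C\eps^{1/\gamma_c}|\Delta p|$ from Lemma~\ref{lem:pressure}. The only (cosmetic) difference is that you reduce to interaction-free subintervals by additivity in $t$, whereas the paper follows each front through its interaction time via the decomposition into the sets $A_1$, $A_2$, $A_3$.
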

\medskip

\begin{proof}[Idea of the proof]
\begin{itemize}
    \item \textit{Pressure estimate.} To simplify, let us assume first that there is single front connecting free states with $x_\alpha(t') > \bar x(t')$ for all $t' \in [s,t]$ (no interaction with the interface $\bar x(t')$), we observe that
    \[
    \int |p(t,x) - p(s,x)| dx 
    = |x_\alpha(t) - x_\alpha(s)| |\sigma_\alpha|.
    \]
    In the general case, summing the contributions of all the fronts, we have
    \begin{align*}
    \int_{I_{2,t}} |p(t,x) - p(s,x)| dx 
    \leq \sum_{\alpha \in A_1} |x_\alpha(t) - x_\alpha(s)| |\sigma_\alpha|
    + \sum_{\alpha \in A_2} |x_\alpha(t'_\alpha) - x_\alpha(s)| |\sigma_\alpha|
    \end{align*}
    where $A_1$ denotes the set of waves connecting free states such that $x_\alpha(t') > \bar x(t')$ for all $t' \in [s,t]$, i.e. which do not interact with the interface; $A_2$ denotes the set of waves connecting free states that interact with the interface for some time $t_\alpha' \in [s,t]$, i.e. $x_\alpha(t'_\alpha) = \bar x(t'_\alpha)$ for some $t'_\alpha \in [s,t]$.
    We then get the second inequality announced in~\eqref{eq:Lip-t-p}:
    \begin{align*}
     \int_{I_{2,t}} |p(t,x) - p(s,x)| dx 
    \leq  \sum_{\alpha \in A_1} |\dot x_\alpha| |\sigma_\alpha| (t-s) 
    + \sum_{\alpha \in A_2}|\dot x_\alpha| |\sigma_\alpha| (t'_\alpha-s)
    \leq L (t-s),
    \end{align*}
    since all the speeds $\dot x_\alpha$ are bounded uniformly with respect to $\eps$.\\
    Let us now consider the congested domain $I_{1,t}$,we have similarly
    \begin{align*}
    \int_{I_{1,t}} |p(t,x) - p(s,x)| dx 
    & \leq \sum_{\alpha \in A_1} |x_\alpha(t) - x_\alpha(s)| |\sigma_\alpha|
    + \sum_{\alpha \in A_2} |x_\alpha(t'_\alpha) - x_\alpha(s)| |\sigma_\alpha|\\
    & \quad + \sum_{\alpha \in A_3} |x_\alpha(t) - x_\alpha(t'_\alpha)| |\sigma_\alpha|,
    \end{align*}
    where $A_1$ denotes the set of waves connecting congested states such that $x_\alpha(t') < \bar x(t')$ for all $t' \in [s,t]$, i.e. which do not interact with the interface in the time interval $[s,t]$; $A_2$ denotes the set of waves connecting congested states that interact with the interface at some time $t_\alpha' \in [s,t]$, i.e. $x_\alpha(t'_\alpha) = \bar x(t'_\alpha)$ for some $t'_\alpha \in [s,t]$; $A_3$ denotes the set of waves connecting congested states that have been created from an interaction with the interface at some time $t'_\alpha$.
    Since $|\dot x_\alpha| \propto \eps^{-\frac{1}{2\gamma_c}}$, we deduce~\eqref{eq:Lip-t-p}.
    \item \textit{Velocity estimate.} The same reasoning can be applied by replacing $\sigma_\alpha= \Delta p$ by $\Delta u$. We use Lemma~\ref{lem:WTV} to bound the velocity differences $\Delta u$ by $\sup|\sqrt{-\hT_\eps'(p)}| |\sigma| \leq C \eps^{\frac{1}{2\gamma_c}}$ in domain $I_{1,t}$. 
    This compensates the singularity of the speed $\dot{x}_\alpha$ in the congested domain and yields~\eqref{eq:Lip-t-u}.
    \item \textit{Specific volume estimate.} In the same manner, we derive~\eqref{eq:Lip-t-v} by writing $|\Delta \tau| \leq \sup |\hT_\eps'(p)| |\sigma|$ and estimating $|\hT_\eps'(p)|$.
\end{itemize}
\end{proof}

\bigskip
\begin{remark}
The singularity $\eps^{-\frac{1}{2\gamma_c}}$ in the estimate~\eqref{eq:Lip-t-p} involving the pressure is the main originality of the model and the signature of the infinite speed of propagation in the congested domain (a similar estimate is derived by Colombo et al. in~\cite{colombo2016ARMA} for the low Mach limit).
From the standpoint of the mathematical analysis, this singularity prevents us to get the strong convergence of the pressure as $\eps$ goes to 0 (see Section~\ref{sec:lim-ep} below). 
\end{remark}

\bigskip
Lipschitz estimates in space are collected in the following.
\begin{lemma}[Lipschitz continuity in space]\label{lem:Lip_x}
Under the assumptions of Theorem~\ref{thm:main}, let $t > 0$ and $\hU^{\eps,\rho}(t) \in \mathcal{D}^{\delta_2,\eps}_t$ be the wave front tracking approximate solution.
For any $0 < x_1 < x_2$, the following estimates hold.
\begin{itemize}
 \item On the free side of the interface:
    \begin{align}
\big\| p^{\eps,\rho}(\cdot,\bar x_\eps(\cdot) +  x_1) - p^{\eps,\rho}(\cdot,\bar x_\eps(\cdot) + x_2)\big\|_{L^1(0,t)} & \leq C  (x_2 - x_1);\\
\big\| u^{\eps,\rho}(\cdot,\bar x_\eps(\cdot) +  x_1) - u^{\eps,\rho}(\cdot,\bar x_\eps(\cdot) + x_2)\big\|_{L^1(0,t)} & \leq C (x_2 - x_1);
    \end{align}
    \item On the congested side of the interface:
\begin{align}
\big\| p^{\eps,\rho}(\cdot,\bar x(\cdot) -  x_1) - p^{\eps,\rho}(\cdot,\bar x(\cdot) - x_2)\big\|_{L^1(0,t)}
& \leq C (x_2 - x_1), \\
\big\| u^{\eps,\rho}(\cdot,\bar x(\cdot) -  x_1) - u^{\eps,\rho}(\cdot,\bar x(\cdot) - x_2)\big\|_{L^1(0,t)}
& \leq C \eps^{\frac{1}{2\gamma_c}}  (x_2 - x_1), \label{eq:lip_x_u_cong}
\end{align}
\end{itemize}
for some positive constant $C$ independent of $\eps$.
\end{lemma}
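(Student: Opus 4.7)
The plan is to dominate each $L^1$-in-time difference by the time integral of the total variation of $\hU^{\eps,\rho}(s,\cdot)$ restricted to the strip of width $x_2-x_1$ flanking the interface, and then apply Fubini to rewrite it as a sum over fronts $\alpha$ of $|\sigma_\alpha|\,m_\alpha$, where $m_\alpha$ is the Lebesgue measure of the set of times $s\in[0,t]$ for which $x_\alpha(s)$ lies in the moving strip. Concretely,
\[
\int_0^t\!\bigl|p^{\eps,\rho}(s,\bar x^\eps(s)+x_1)-p^{\eps,\rho}(s,\bar x^\eps(s)+x_2)\bigr|\,ds\;\le\;\int_0^t\mathrm{TV}\bigl(p^{\eps,\rho}(s,\cdot),[\bar x^\eps(s)+x_1,\bar x^\eps(s)+x_2]\bigr)\,ds\;=\;\sum_\alpha|\sigma_\alpha|\,m_\alpha,
\]
and the same reduction covers the congested side and the velocity estimates (with jump $|\Delta u|_\alpha$ replacing $|\sigma_\alpha|$).

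Since both $x_\alpha$ and $\bar x^\eps$ are piecewise linear, on each linearity interval $m_\alpha\le(x_2-x_1)/|\dot x_\alpha-\dot{\bar x}^\eps|$, and I would produce uniform-in-$\eps$ lower bounds on the relative speed as follows. In the congested domain, Lemma~\ref{lem:pressure} gives $|\dot x_\alpha|\asymp\eps^{-1/(2\gamma_c)}$, which dominates the bounded $\dot{\bar x}^\eps$, so $m_\alpha\le C\eps^{1/(2\gamma_c)}(x_2-x_1)$. In the free domain, $|\dot x_\alpha|=O(1)$ uniformly in $\eps$; for a $1$-front $\dot x_\alpha<0$ is separated from $\dot{\bar x}^\eps\in[\bar\lambda_2-\delta_0,\bar\lambda_2+\delta_0]$ via \eqref{eq:interf-speed-bound}; for a $2$-front, the strict Lax shock condition at the reference interface, $\lambda_2^\eps(p_{0,2})<\bar\lambda_2$, together with the smallness of $\delta_0$ and $\overline\eps_0$, yields a gap $|\dot x_\alpha-\dot{\bar x}^\eps|\ge c>0$ independent of $\eps$, hence $m_\alpha\le C(x_2-x_1)$. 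Fronts created at the interface satisfy the same bound starting from their birth time $t_\alpha'$.

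Combining these residence-time bounds with the uniform Glimm estimate (Proposition~\ref{eq:glimm-decrease} via Lemma~\ref{lem:WTV}) closes the argument. On the free side, both sums collapse to $C(x_2-x_1)$ times $\mathrm{TV}(p^{\eps,\rho},I^\eps_{2,t})$ or $\mathrm{TV}(u^{\eps,\rho},I^\eps_{2,t})$, which are $O(1)$. On the congested side, the pressure sum is at most $C\eps^{1/(2\gamma_c)}(x_2-x_1)\,\mathrm{TV}(p^{\eps,\rho},I^\eps_{1,t})\le C(x_2-x_1)$ after absorbing the vanishing factor into $C$; for the velocity, the bound $|\Delta u|_\alpha\le C\eps^{1/(2\gamma_c)}|\sigma_\alpha|$ from the Rankine--Hugoniot computation in the proof of Lemma~\ref{lem:WTV}, together with $m_\alpha\le C\eps^{1/(2\gamma_c)}(x_2-x_1)$ and Corollary~\ref{cor:TV_u_cong}, produces at worst $C\eps^{1/(2\gamma_c)}(x_2-x_1)$ as asserted in \eqref{eq:lip_x_u_cong}.

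The main obstacle I anticipate is the uniform separation of wave speeds from the interface speed on the free side: a $2$-wave immediately to the right of the interface has speed $\lambda_2^\eps$ evaluated at a free pressure and both $\dot x_\alpha$ and $\dot{\bar x}^\eps$ are bounded $O(1)$ values, so nothing a priori prevents them from being close and inflating $m_\alpha$. The resolution is that the reference $2$-shock is a genuine (Lax) shock, whence $\bar\lambda_2-\lambda_2^0(p_{0,2})>0$ strictly; by choosing $\delta_0,\overline\eps_0$ small enough, this strict gap is preserved uniformly for the perturbed fronts and perturbed interface speeds, delivering the $\eps$-independent constant $C$ in the free-side estimates. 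The congested side, by contrast, is essentially automatic because the disparity $|\dot x_\alpha|\gg|\dot{\bar x}^\eps|$ is built into Lemma~\ref{lem:pressure}.
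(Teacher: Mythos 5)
Your proposal follows essentially the same route as the paper: reduce the $L^1$-in-time difference to a sum over fronts of (wave strength)\,$\times$\,(residence time in the moving strip of width $x_2-x_1$), bound the residence time by $(x_2-x_1)/|\dot x_\alpha-\dot{\bar x}^\eps|$, and close with the Glimm estimates; your justification of the uniform speed gap on the free side (negative speeds for $1$-fronts, the strict Lax inequality $\lambda_2^\eps$ evaluated at free right states $<\dot{\bar x}^\eps$ for $2$-fronts, stable under small $\delta_0,\overline\eps_0$) is exactly the point the paper leaves as ``one can then show'' and is a welcome addition.

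One piece of bookkeeping is off, though it does not change the architecture. After the Fubini step, the quantity $\sum_\alpha|\sigma_\alpha|\,m_\alpha$ is controlled by
$\frac{x_2-x_1}{\inf_\alpha|\dot x_\alpha-\dot{\bar x}^\eps|}\,\sup_{x\in[x_1,x_2]}\mathrm{TV}_{[0,t]}\big(p^{\eps,\rho}(\cdot,\bar x^\eps(\cdot)\pm x)\big)$,
i.e.\ by the \emph{temporal} total variation along a shifted vertical line, which counts every crossing of that line with multiplicity --- not by $\mathrm{TV}(p^{\eps,\rho}(t,\cdot),I^\eps_{i,t})$ at a fixed time, as you write. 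The distinction matters precisely on the congested side: there the fast fronts cross a given shifted line often enough that $\sup_x \mathrm{TV}_{[0,t]}(p^{\eps,\rho}(\cdot,\bar x^\eps(\cdot)-x))$ is only $O(\eps^{-1/(2\gamma_c)})$, and it is the cancellation of this factor against the residence time $O(\eps^{1/(2\gamma_c)})(x_2-x_1)$ that yields the stated $C(x_2-x_1)$ for the pressure. Your version, which pairs the $O(\eps^{1/(2\gamma_c)})$ residence time with an $O(1)$ spatial variation, would give the too-strong vanishing bound $C\eps^{1/(2\gamma_c)}(x_2-x_1)$ for the congested pressure; you then ``absorb'' the factor, so the final claim is right, but the intermediate estimate is not the one that actually holds. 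The velocity estimate \eqref{eq:lip_x_u_cong} survives the correction because $|\Delta u|\leq C\eps^{1/(2\gamma_c)}|\Delta p|$ makes $\sup_x\mathrm{TV}_{[0,t]}(u^{\eps,\rho}(\cdot,\bar x^\eps(\cdot)-x))$ genuinely $O(1)$, which multiplied by the residence time gives the asserted $C\eps^{1/(2\gamma_c)}(x_2-x_1)$.
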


\medskip
\begin{proof}[Idea of the proof] 
Following a similar idea as before, we can write for a single front $(\sigma_\alpha, x_\alpha)$ propagating in the free domain (without any interaction with the interface or other waves)
\begin{align}\label{eq:Lip_x_step0}
\int_0^t \big| p^{\eps,\rho} (\tau, \bar x(\tau) + x_1) - p^{\eps,\rho} (\tau, \bar x(\tau) + x_2) \big| d\tau
\leq |\hat t_1 - \hat t_2 | \big|\sigma_\alpha\big|,
\end{align}
where $\hat t_i$, $i=1,2$, denotes the time at which the front arrives at the position $\bar x(t_i) + x_i$. 
Let us assume that the front was located at $\bar x(t_0) + x_0$ at time $t_0$ and define the function $x \mapsto \hat t(x)$ for $x> 0$ such that it holds
\[
\bar x(t_0) + x_0 + (\hat t(x) - t_0) \dot x_\alpha 
= \bar x (\hat t) + x.
\]
We observe that
\[
\hat t'(x) = \dfrac{1}{\dot x_\alpha - \dot{\overline{x}}(\hat t)}.
\]
As a consequence, coming back to~\eqref{eq:Lip_x_step0}, we get
\begin{align*}
 \int_0^t \big| p^{\eps,\rho} (\tau, \bar x(\tau) + x_1) - p^{\eps,\rho} (\tau, \bar x(\tau) + x_2) \big| d\tau
\leq \dfrac{|x_1 - x_2|}{\inf_{\tau\in [0,t]} \{|\dot x_\alpha - \dot {\bar{x}}| \} } \big|\sigma_\alpha\big|.
\end{align*}
One can then show that the difference between the speed of propagation of waves connecting free states (i.e. with pressure in the interval $[\pfm,\pfp]$) and the speed of interface is controlled from below uniformly with respect to $\eps$, which allows to control the above quantity by $C |x_1-x_2|$, with $C$ a constant independent of $\eps$. \\
More generally, one can prove that 
\begin{align*} \displaystyle
\int_0^t \big| p^{\eps,\rho} (\tau, \bar x(\tau) + x_1) - p^{\eps,\rho} (\tau, \bar x(\tau) + x_2) \big| d\tau
\leq \dfrac{|x_1 - x_2|}{\inf_{\tau \in [0,t]} \{|\dot x_\alpha - \dot {\bar{x}}| \} } \sup_{x \in [x_1,x_2]} TV_{[0,t]} \big( p^{\eps,\rho}(\cdot, \bar x(\cdot) + x) \big),
\end{align*}
and we use the decrease of the Glimm functional to demonstrate that 
\[
\sup_{x \in [x_1,x_2]} TV_{[0,t]} \big( p^{\eps,\rho}(\cdot, \bar x(\cdot) +x ) \big) \leq C,\]
with $C$ independent of $\eps$ when $x_{1,2} > 0$ (i.e. on the free side of the interface).\\
On the congested side of the interface, we combine the unbounded speed of propagation of the congested waves (proportional to $\eps^{-\frac{1}{2\gamma_c}}$) and the uniform control of $\sup_{x \in [x_1,x_2]} TV_{[0,t]} \big( u^{\eps,\rho}(\cdot, \bar x(\cdot) - x ) \big)$ to deduce~\eqref{eq:lip_x_u_cong}, while $\sup_{x \in [x_1,x_2]} TV_{[0,t]} \big( p^{\eps,\rho}(\cdot, \bar x(\cdot) - x ) \big)$ is shown to be controlled by $C \eps^{-\frac{1}{2\gamma_c}}$.
Additional details are provided in~\cite{ancona2022}.
\end{proof}
\subsection{Existence of a BV solution $\hU^\eps=(p^\eps,u^\eps)$}\label{sec:lim-rho}
Let now explain how to pass to the limit $\rho\to 0$ and prove the first result announced in Theorem~\ref{thm:main}.

\medskip

Let $\rho = \rho_n$ decreasing to $0$ as $n \to +\infty$. 
For each $n \geq 1$ we have constructed a $\rho_n$-approximate solution,  $\hU^{\eps,\rho_n}$, of the Cauchy problem~\eqref{eq:psystem-ep}-\eqref{eq:init_data_ep}.
From the estimates of Section~\ref{sec:Glimm}, we ensure that $(\hU^{\eps,\rho_n})_n$ has a bounded total variation in space (for fixed $\eps$). 
On the other hand, the maps $t \mapsto \hU^{\eps,\rho_n}(t, \cdot)$ have been shown to be Lipschitz continuous with values in ${\bf L^1}(\R;\R^2)$ in Lemma~\ref{lem:Lip_t}, with a Lipschitz constant uniform with respect to $n$ (but dependent on $\eps> 0$).
We can then extract a subsequence (see Theorem 2.4 from~\cite{bressan2000}), still denoted $(\hU^{\rho_n,\eps})_n$, which converges in ${\bf L^1_{loc}}$ to some limit $\hU^{\eps}$ belonging to $\mathcal{D}^{\delta_2,\eps}_t$ for a.e. $t \geq 0$ and for some $\delta_2 \leq \delta_0$.\\
Initially, by Lemma~\ref{lem:Lip_t}, $\|\hU^{\eps,\rho_n}(0,\cdot) - \hU^\eps_{in}(\cdot)\|_{L^1} \to 0$ as $n \to +\infty$.
One can next follow the lines of~\cite{bressan2000} [Chapter 7.4] to verify that $\hU^\eps$ is a weak entropy solution of~\eqref{eq:psystem-ep} in the sense of Definition~\ref{entr-sol-def-1}.\\
Let us recall that we have defined in Section~\ref{sec:Glimm} the curve $t \mapsto \bar x^{\eps,\rho_n}(t)$ representing the position of the interface between the congested domain $I^{\eps,\rho_n}_{1,t}$ and the free domain $I^{\eps,\rho_n}_{2,t}$.
We have
\[
\dot{\bar x}^{\eps,\rho_n}(t)
= - \dfrac{u^{\eps,\rho_n}(t, \bar x^{\eps,\rho_n}(t) +)  -  u^{\eps,\rho_n}(t, \bar x^{\eps,\rho_n}(t) -)}
{ \hT_\eps(p^{\eps,\rho_n}(t, \bar x^{\eps,\rho_n}(t) +))  - \hT_\eps( p^{\eps,\rho_n}(t, \bar x^{\eps,\rho_n}(t) -))}.
\]
In view of the control of $\hU^\eps(t,\cdot)$ in $\mathcal{D}^{\delta_2,\eps}_t$, it is easy to observe that for $\eps$ small enough
\begin{equation}
\bar\lambda_2 - \delta_0 < \dot{\bar x}^{\rho_n,\eps}(t) < \bar\lambda_2 + \delta_0,
\end{equation}
$\bar \lambda_2$ being the speed of the reference $2$-shock interface defined in~\eqref{RH-lim} (which is also independent of $n$ and $\eps$).
This control ensures that there exists $\bar x^\eps \in W^{1,\infty}(\R_+)$ such that
\begin{align}
    \bar x^{\eps,\rho_n} &\to \bar x^{\eps} \quad \text{uniformly on any}~ [0,T], ~ T > 0,\\
    \dot{\bar x}^{\eps,\rho_n}(t) &\to \dot{\bar x}^{\eps}(t) \quad \text{a.e.}~ t \in [0,T].
\end{align}

\subsection{Convergence $\eps \to 0$}\label{sec:lim-ep}

Let us now achieve the proof of Theorem~\ref{thm:main} by passing to the limit with respect to the parameter $\eps$.
We split the proof in several steps.
\begin{itemize}
    \item First, let us observe that, with the same arguments as in Section~\ref{sec:lim-rho}, we ensure the existence of $\bar x \in W^{1,\infty}(\R_+)$ such that
    \begin{align}
    \bar x^{\eps} &\to \bar x \quad \text{uniformly on any}~ [0,T], ~ T > 0,\\
    \dot{\bar x}^{\eps}(t) &\to \dot{\bar x}(t) \quad \text{a.e.}~ t \in [0,T].
    \end{align}

    \item Using estimates~\eqref{eq:Lip-t-u} on the one hand and~\eqref{eq:dec-Glimm}-\eqref{eq:control-WTV} on the other hand, we can again extract a subsequence $(u^\eps)_\eps$ converging in ${\bf L^1_{loc}}$ towards some limit $u^*$.
    In the same manner, we ensure that $(\tau^\eps)_\eps$ converges (up to a subsequence) towards some limit $\tau^*$ in ${\bf L^1_{loc}}$.
    Regarding the pressure, we only infer that $(p^\eps)_\eps$ converges weakly-* in ${\bf L^\infty}(\R_+ \times \R)$.
    We also ensure that the limit $(p^*(t,\cdot),u^*(t,\cdot))$ belongs to $\mathcal{D}^{\delta_2}_t$ for any $t$. \\
    Passing to the limit in the sense of distributions in~\eqref{eq:psystem-ep}, we check that the following equations
    \begin{equation}\label{eq:limit-0}
    \begin{cases}
    \partial_t \tau^* -\partial_x u^* = 0, \\
    \partial_t u^* + \partial_x p^* = 0,
    \end{cases}
    \end{equation}
    hold in the sense of distributions.
    \item Let us now define the shifted variable:
    \begin{equation}
        \tilde\hU^\eps(t,x) \doteq \hU^\eps(t, x- \bar x(t) - \bar x^\eps(t)), \quad \forall \ t>0,x\in \R,
    \end{equation}
    so that $\tilde\hU^\eps(t,\bar x(t)) = \hU^\eps(t, \bar x^\eps(t))$.
    In view of the previous arguments, we ensure that
    \begin{align*}
        \tilde{u}^\eps (t,\cdot) \to u^*(t,\cdot) \quad \text{strongly in}~{\bf L^1}(I_{1,t})~\text{ for a.a. } t \geq 0,
    \end{align*}
    and from Corollary~\ref{cor:TV_u_cong}:
    \begin{align*}
        TV (u^*(t),I_{1,t}) 
        & \leq \liminf_{\eps \to 0} \ TV(\tilde{u}^\eps(t), I_{1,t}) \\
        & \leq \liminf_{\eps \to 0} \ TV(u^\eps(t), I_{1,t}^\eps) = 0.
    \end{align*}
    Consequently, $u^*(t, \cdot)$ is constant on the congested domain $I_{1,t}$, for a.a. time $t \geq 0$:
    \begin{equation}
    u^*(t,x) = u_c(t) \quad \forall \ x \in I_{1,t}. 
    \end{equation}
    Similarly, we show that
    \begin{equation}
     \tilde{\tau}^\eps (t,\cdot) \to \tau^*(t,\cdot) \quad \text{strongly in}~L^1(I_{1,t})~\text{ for a.a. } t \geq 0.
    \end{equation}
   and $ \tau^*(t,x) \equiv \text{const} = 1$ for all $x \in I_{1,t}$.
    \item We now use the Lipschitz continuity estimates in space derived in Lemmas~\ref{lem:Lip_x} to infer the strong convergence of the traces of the different variables from both sides of the interface.
    On the free side of the interface, we have
    \begin{align*}
        \tilde{p}(\cdot, \bar x(\cdot) + ) \to p^*(\cdot, \bar x(\cdot) + ) \quad & \text{in} \quad {\bf L^1_{loc}}(\R^+), \\
        \tilde{u}(\cdot, \bar x(\cdot) + ) \to u^*(\cdot, \bar x(\cdot) + ) \quad & \text{in} \quad {\bf L^1_{loc}}(\R^+), \\
        \tilde{\tau}(\cdot, \bar x(\cdot) + ) \to \tau^*(\cdot, \bar x(\cdot) + ) \quad & \text{in} \quad {\bf L^1_{loc}}(\R^+),
    \end{align*}
    while on the congested side of the interface:
    \begin{align*}
        \tilde{u}(\cdot, \bar x(\cdot) - ) \to u^*(\cdot, \bar x(\cdot) - ) = u_c(\cdot) \quad & \text{in} \quad {\bf L^1_{loc}}(\R^+), \\
        \tilde{\tau}(\cdot, \bar x(\cdot) - ) \to \tau^*(\cdot, \bar x(\cdot) - ) = 1 \quad & \text{in} \quad {\bf L^1_{loc}}(\R^+),
    \end{align*}

    \item   For a.a $t$, the value $u_c(t)$ is calculated by passing to the limit in the Rankine-Hugoniot relation
    \begin{equation*}
        \tilde u^\eps(t, \bar x(t)-) = \tilde u^\eps(t, \bar x(t) +) + \dot{\overline x}^\eps(t) \big(\tilde \tau^\eps(t,\bar x(t)+) -1\big),
    \end{equation*}
    and using the convergence of the traces from both sides of the interfaces:
    \begin{equation}\label{eq:uc_0}
        u_c(t) = u^*(t, \bar x(t) +) + \dot{\overline x}(t) \big(\tau^*(t,\bar x(t)+) -1\big).
    \end{equation}
    \item Let us now discuss the pressure.
    First, we observe from the estimates of the previous sections that $\tilde p^\eps$ satisfy uniform estimates in the free domain $I_{2,t}$, so that
    \[
    \tilde p^\eps \to p^* \quad \text{strongly in}~ \mathbf{L^1_{loc}}\Big(\underset{t\geq 0}{\bigcup} \{t\} \times I_{2,t}\Big).
    \]
    This strong convergence in the free domain, combined with the fact that $\tau \equiv 1$ in the congested domain $I_{1,t}$ allows us to identify the limit specific volume: $\tau^* = \hT(p^*)$ with $\hT$ defined by~\eqref{eq:T-lim}, i.e. the following equations hold in the sense of distributions
    \begin{equation}\label{eq:limit-1}
    \begin{cases}
    \partial_t \hT(p^*) -\partial_x u^* = 0, \\
    \partial_t u^* + \partial_x p^* = 0.
    \end{cases}
    \end{equation}
    \item We can determine the pressure $p^*(t,\cdot)$ in the congested domain $I_{1,t}$. 
    Looking at the limit momentum equation and using the fact that $u^*(t,\cdot)$ is constant on $I_{1,t}$, we deduce that $p^*(t,\cdot)$ is affine.
    On the other hand, since $\hU^*(t,\cdot) \in \mathcal{D}^{\delta_2}_t$, we have $p^*(t,x) \in ]p_{0,1}-\delta_2,p_{0,1}+\delta_2[$ for all $x \in I_{1,t}$.
    Hence $p^*(t,\cdot)$ is constant in $I_{1,t}$: $p^*(t,x) = p_c(t)$ for all $x \in I_{1,t}$.
    The value $p_c(t)$ is finally calculated from the Rankine-Hugoniot condition,  since we know that $(p^*, u^*)$ is a distributional solution of~\eqref{eq:limit-1}.
    Hence,
    \begin{equation}\label{eq:pc_0}
        p^c(t) = p^*(t,\bar x(t)+) + \dot{\bar x}(t) \big( u_c(t) - u^*(t, \bar x(t)+) \big).
    \end{equation}
    \item Since $p^*(t,\cdot)$ is constant in the congested domain, the limit momentum equation tells us that $t \mapsto u_c(t)$ is constant, i.e.:
    \[
    u^c(t) = u_c(0) \quad \forall \ t > 0.
    \]
    As a consequence, we get from \eqref{eq:uc_0}-\eqref{eq:pc_0}:
    \begin{align*}
     p^c(t) & = p^*(t,\bar x(t)+) + \dot{\bar x}(t) \big( u^c(0) - u^*(t, \bar x(t)+) \big), \\
    \dot{\bar x}(t) & = \dfrac{u^c(0) - u^*(t, \bar x(t)+)}{\tau^*(t,\bar x(t) +) - 1}. 
    \end{align*}
    In particular, one can check that $p^c(0)= \text{In}(p_\text{in})$ according with~\eqref{eq:in-function} introduced in Definition~\ref{entr-sol-def-2}.
\end{itemize}
This achieves the proof of Theorem~\ref{thm:main}.

\section{Extension to more general reference solutions}\label{sec:concl}
We presented the general strategy of construction of weak-entropy BV solutions in the case of an initial datum $\hU_\text{in} = (p_\text{in}, u_\text{in}) \in \mathcal{D}_0^{\delta, \eps}$ as defined in \eqref{eq:set-solns-eps}. As pointed out before, the initial configuration is a small BV perturbation of a reference solution that is given by a 2-shock congested/free interface, see Figure \ref{fig:sol-ref}.
However, the general strategy applies to more general configurations, with more than one single free/congested interface.
\subsection*{The case of two non-interacting discontinuity interfaces}
The reference solution for the limit (hard-congested) system \eqref{eq:psystem-lim} is piece-wise constant with two jump discontinuities: the external states are \textit{free} and the middle one is \textit{congested}. Then, given  $\overline x_{1,0} < \overline  x_{2,0}$, we let \begin{equation}
    \label{eq:p-ref-1}
\begin{aligned}
p^\text{ref}(x) & \doteq  p_{1,0}\mathbf{1}_{]-\infty,\, \overline x_{1,0}[}(x) + p_{2,0}\mathbf{1}_{]\overline x_{1,0},\, \overline x_{2,0}[}(x) + p_{3,0}\mathbf{1}_{]\overline  x_{2,0},\,+\infty[}(x), \\
u^\text{ref}(x) & \doteq u_{1,0}\mathbf{1}_{]-\infty,\, \overline x_{1,0}[}(x) + u_{2,0}\mathbf{1}_{]\overline x_{1,0},\, \overline x_{2,0}[}(x) + u_{3,0}\mathbf{1}_{]\overline x_{2,0},\, +\infty[}(x),
\end{aligned}
\qquad x\in\R,
\end{equation}
where
\begin{equation}
\label{eq:pu-lim-ref}
 p_{1,0}, \, p_{3,0} <\kappa < p_{2,0},
\qquad \qquad
u_{1,0} > u_{2,0} >  u_{3,0},
\end{equation}
with
\begin{equation}\label{eq:RH-lim-ref}
p_{2,0} - p_{1,0} = 
\dfrac{(u_{2,0} - u_{1,0})^2}{\hT^{i}(p_{1,0})-1}, \qquad   
p_{3,0} - p_{2,0} = -\dfrac{(u_{3,0} - u_{2,0})^2}{\hT^i(p_{3,0})-1}.
\end{equation}
\begin{figure}
	\centering \includegraphics[scale=0.65]{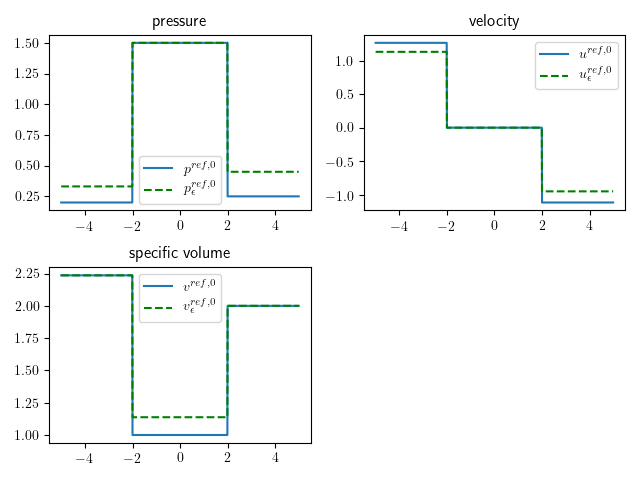}
 	\caption{Reference solutions with two non-interacting interfaces, $\eps = 0.2$, $\kappa =1$.}
 	\label{fig:ref_sol}
 \end{figure}
\noindent This way the left interface is a 1-shock wave and the right one is a 2-shock wave. Similarly to \eqref{eq:RH-eps-ref}, the initial data of the approximate model is a suitable approximation $(p^{\text{ref}, \eps}, u^{\text{ref}, \eps})$ of $(p^\text{ref}, u^\text{ref})$: we can choose in particular (see Figure \ref{fig:ref_sol})
\begin{align*}
    p_{1,0}^\eps = p_{1,0},\quad\
    p_{3,0}^\eps = p_{3,0}, \qquad u_{1,0}^\eps=u_{1,0} > u_{2,0} > u_{3,0}^\eps=u_{3,0},\,
\end{align*}
where $(p_{2,0}^\eps, u_{2,0}^\eps)$ is obtained by requiring that it is both: the right state of a 1-shock with left state given by $(p_{1,0}^\eps, u_{1,0}^\eps)$; the left state of a 2-shock with right state $(p_{3,0}^\eps, u_{3,0}^\eps)$.
\paragraph{Result}[see \cite{ancona2022}]
For any initial datum $\hU_\text{in}^\eps =(p_\text{in}^\eps, u_\text{in}^\eps)$ that is a \textit{small} BV perturbation of the above $(p^{\text{ref},\eps}, u^{\text{ref},\eps})$, there exists an entropy weak solution $\hU^\eps=(p^\eps, u^\eps)$ to the (approximate) soft-congested system \eqref{eq:psystem-ep} (with a suitably adapted singular pressure $\hP_\eps (\tau))$ and two (Lipschitz) interfaces ${\overline x}_i^\eps(t), \, i \in \{1,2\}$. Moreover, for any initial datum $\hU^\text{in}$ such that $ \hU_\text{in}^\eps \to \hU^\text{in}$ in ${{\bf L^1_{loc}}}$, there exists a global-in-time entropy weak solution $\hU^*=(p^*, u^*)$ to the (limit) hard-congested system \eqref{eq:psystem-lim} with (limiting) interfaces ${\overline x}_i(t), \, i \in \{1,2\}$.
In particular, the solution $(p^*, u^*)$ inside the congested domain $\big\{(t,x)\,:\ \overline x_1(t)<x<\overline x_2(t),\ t\geq 0\big\}$ has different properties with respect to the case of one single interface:
 \begin{equation}
      \label{eq:p-u-cong-value}
      \begin{aligned}
       p^*(t,x)&=
       \left(\frac{\overline{x}_2(t)-x}{\overline{x}_2(t)-\overline{x}_1(t)}\right) p^{c}_1(t)
+ \left(\frac{x-\overline{x}_1(t)}{\overline{x}_2(t)-\overline{x}_1(t)}\right)p^{c}_2(t)\,,
       \\
       \noalign{\medskip}
       u^*(t,x)&=u^{c}(t)= u_\text{in}^c-
       \int_0^t
       \frac{\,p^{c}_2(s)-p^{c}_1(s)\,}
      {\overline{x}_2(s)-\overline{x}_1(s)}~ds\,,
      \end{aligned}
      \end{equation}
      where $p^{c}_i(t), \, i \in \{1,2\}$ are given by the Rankine-Hugoniot conditions as in~\eqref{eq:RH-lim-ref}:
      \begin{align*}
         p^{c}_1(t)&=p^*(t, \overline{x}_1(t)-)+
        \frac{(u^{c}(t)-u^*(t, \overline{x}_1(t)-))^2}{\hT^{i}(p^*(t, \overline{x}_1(t)-))-1}; \\
        \noalign{\smallskip}p^{c}_2(t)&=p^*(t, \overline{x}_2(t)+)+\frac{(u^*(t, \overline{x}_2(t)+)-u^{c}(t))^2}{\hT^{i}(p^*(t, \overline{x}_2(t)+))-1}.
      \end{align*}
      %
      %

\noindent A striking point of the above result is that the construction of the entropy solution $\hU^*$ to the limit system \eqref{eq:psystem-lim}, as a suitable limit of solutions $\hU^\eps$ of the approximate one \eqref{eq:psystem-ep}, relies on a linearization 
of the singular pressure $\hP_\eps(\tau)$, which is crucial to obtain the desired uniform bounds on the the Glimm functional.
\subsection*{The case of two interacting discontinuity interfaces}
This is the case where the reference solution for the limit system \eqref{eq:psystem-lim} has again two jumps but the external states are congested and the middle one is free. 
More precisely, the left (congested) extreme and the middle (free) state are connected by a 2-shock interface, while the middle (free) and the right (congested) extreme are connected by a 1-shock interface. 
Being the 2-shock interface initially located at the left of the 1-shock interfaces, such interfaces will interact at some time $t^*>0$. 
The interaction between two interfaces is a delicate issue since it involves interactions of \textit{congested states with different velocities}. 
Once the interfaces connecting such external congested states with different velocities 
interact, they generate two shocks with an intermediate state having \textit{unbounded pressure},
that blows up in the limiting system with a precise rate given by the singular pressure~$\hP_\eps$. 
This interesting phenomenon is the reason why in this case we can provide an entropy weak solution $\hU^*(t)$ to \eqref{eq:psystem-lim} that is defined only on a \textit{finite} interval of time.
\subsection*{The general case}
Consider now the case of a reference solution with three interfaces given by two interacting discontinuity interfaces (say a 2-shock interface $\overline x_1$ on the left of a 1-shock interface $\overline x_2$ as in the previous case) followed by a 2-shock interface $\overline x_3$. 
After the interaction between the shocks located at $\overline x_1$, $\overline x_2$ occurs, two new discontinuities emerge, say a 1-shock $\overline x_2'$ on the left of a 2-shock $\overline x_1'$ that travels with a faster speed than the one of the 2-discontinuity $\overline x_3$. Therefore the two discontinuities $\overline x_2'$, $\overline x_3$ will eventually interact generating a 1-rarefaction located between two lines $\overline x_4, \overline x_5$, and a 2-discontinuity $\overline x_{1,3}$.
Notice that the 1-rarefaction generated by such an interaction connects two congested states: a left state with unbounded pressure (along $\overline x_4$) with a right state with bounded pressure (along $\overline x_5$). 
On the other hand, all waves connecting congested states in the limit system travel with infinite speed. Hence, the front lines $\overline x_1'$, $\overline x_2'$, $\overline x_4$, $\overline x_5$ will all converge to parallel lines to the x-axis in the solution of the limit system and the congested region of unbounded pressure between $\overline x_1'$, $\overline x_2'$, $\overline x_4$, as well as the rarefaction fan between $\overline x_4$, $\overline x_5$, coalesces in a line of measure zero in $x$-$t$ plane. As a result, in this case we can indeed obtain a solution for the limit system which is globally defined in time.
We give a representation of the wave interactions in Figure~\ref{fig:3int}.
\begin{figure}
    \centering
    \includegraphics{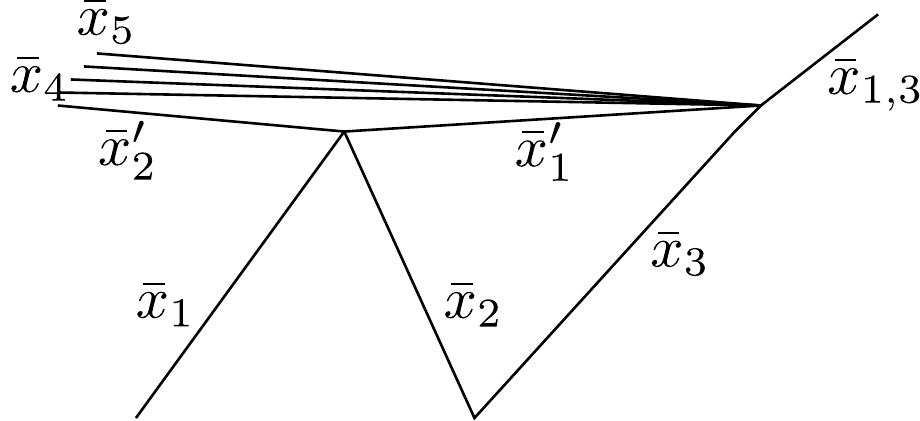}
    \caption{Sketchy representation of the case of three interfaces in the $(x-t)$-plane for small $\eps$.}
    \label{fig:3int}
\end{figure}

\medskip
Our conjecture is that, for a general reference solution $\hU^\text{ref}=(p^\text{ref}, u^\text{ref})$ with a finite number of discontinuity free/congested (resp. congested/free) interfaces, there are two possibilities:
\begin{enumerate}
    \item if at least one of the extreme states is {free}, then there exists a global-in-time BV entropy weak solution;
    \item if the extreme values are both congested, then there exists a BV entropy weak solution defined on a \textit{finite} time interval.
\end{enumerate}

\bibliography{biblio}
\end{document}